\newtheorem{theorem}{Theorem}[section]
\newtheorem{prop}[theorem]{Proposition}
\newtheorem{coro}[theorem]{Corollary}
\newtheorem{lemma}[theorem]{Lemma}
\newtheorem{fact}[theorem]{Fact}
\theoremstyle{definition}
\begin{document}
\title[$(-2,3,2s+1)$-Pretzel knot and $\mathbb{R}$-covered foliations]{A good presentation of $(-2,3,2s+1)$-type Pretzel knot group and $\mathbb{R}$-covered foliation}
\author{Yasuharu NAKAE}
\address{Graduate School of Engineering and Resource Science, Akita University,
1-1 Tegata Gakuen-machi, Akita city, Akita, 010-8502, Japan}
\email{nakae@math.akita-u.ac.jp}
\thanks{
typeset by \AmS-\LaTeX
}
\subjclass[2010]{Primary 57M25; Secondary 57R30}
\begin{abstract}
Let $K_s$ be a $(-2,3,2s+1)$-type Pretzel knot ($s\geqq 3$)
and $E_{K_s}(p/q)$ be a closed manifold obtained by Dehn surgery along $K_s$ with a slope $p/q$.
We prove that if $q>0$, $p/q\geqq 4s+7$ and $p$ is odd, then $E_{K_s}(p/q)$ cannot contain an $\mathbb{R}$-covered foliation.
This result is an extended theorem of a part of works of Jinha Jun for $(-2,3,7)$-Pretzel knot.
\end{abstract}
\maketitle

\section{Introduction}

In this paper, we will discuss non-existence of $\mathbb{R}$-covered foliations
on a closed $3$-manifold obtained by Dehn surgery along some class of a Pretzel knot.

A codimension one, transversely oriented foliation $\mathcal{F}$ on a closed $3$-manifold $M$ is called a Reebless foliation
if $\mathcal{F}$ does not contain a Reeb component.
By the theorems of Novikov~\cite{No}, Rosenberg~\cite{Ro}, and Palmeira~\cite{Pa},
if $M$ is not homeomorphic to $S^2\times S^1$ and contains a Reebless foliation,
then $M$ has properties that
the fundamental group of $M$ is infinite,
the universal cover $\widetilde{M}$ is homeomorphic to $\mathbb{R}^3$
and all leaves of its lifted foliation $\widetilde{\mathcal{F}}$ on $\widetilde{M}$
are homeomorphic to a plane.
In this case we can consider a quotient space $\mathcal{T}=\widetilde{M}/\widetilde{\mathcal{F}}$,
and $\mathcal{T}$ is called a leaf space of $\mathcal{F}$.
The leaf space $\mathcal{T}$ becomes a simply connected $1$-manifold,
but it might be a non-Hausdorff space.
If the leaf space is homeomorphic to $\mathbb{R}$, $\mathcal{F}$ is called an $\mathbb{R}$-covered foliation.
The fundamental group $\pi_1(M)$ of $M$ acts on the universal cover $\widetilde{M}$ as deck transformations.
Since this action maps a leaf of $\widetilde{\mathcal{F}}$ to a leaf, 
it induces an action of $\pi_1(M)$ on the leaf space $\mathcal{T}$.
In fact, it is known that the action has no global fixed point and it acts on $\mathcal{T}$ as a homeomorphism 
(for basic definitions and properties of a foliation, see \cite{CC1}, and for properties of a Reebless foliation and its leaf space, see ~\cite[{\it Chapter 9} and {\it Appendix D}]{CC2}).

A closed manifold with a finite fundamental group cannot contain a Reebless foliation by the above property,
but many people conjectured that all closed hyperbolic $3$-manifolds could contain Reebless foliations.
In \cite{RSS}, Roberts, Shareshian and Stein negatively answered this conjecture as follows:

\begin{theorem}\label{ThmRSS}{\rm (R.\,Roberts, J.\,Shareshian, M.\,Stein\,\cite[Theorem A]{RSS})}
There exist infinitely many closed orientable hyperbolic $3$-manifolds
which do not contain a Reebless foliation.
\end{theorem}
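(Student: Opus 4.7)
The plan is to exhibit an explicit infinite family $\{M_n\}$ of closed orientable hyperbolic $3$-manifolds none of which admits a Reebless foliation. A natural source is Dehn filling: choose a hyperbolic knot $K\subset S^3$ and let the slopes $p/q$ range over some infinite set $S$ of rationals. By Thurston's hyperbolic Dehn surgery theorem all but finitely many such fillings $E_K(p/q)$ are hyperbolic, and since $H_1(E_K(p/q))\cong\mathbb{Z}/p\mathbb{Z}$ is unbounded along $S$, the resulting manifolds are pairwise non-homeomorphic, providing the required infinite family.

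The non-existence of a Reebless foliation is then argued by contradiction on a fixed $M=E_K(p/q)$. Assuming $\mathcal{F}$ is Reebless, the theorems of Novikov, Rosenberg, and Palmeira quoted above yield an action of $\pi_1(M)$ on the leaf space $\mathcal{T}$, a simply connected (possibly non-Hausdorff) $1$-manifold, by orientation-preserving homeomorphisms without global fixed point. The strategy is to fix a small presentation of $\pi_1(M)$ — a two-generator Wirtinger-type presentation of $\pi_1(S^3\setminus K)$ supplemented by the surgery relation $\mu^{p}\lambda^{q}=1$ — that is rigid enough to preclude every candidate action on such a $1$-manifold.

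Concretely, I would track the fixed-point sets $\mathrm{Fix}(a)$ and $\mathrm{Fix}(b)$ of the two generators in $\mathcal{T}$ and analyze how the surgery word constrains their relative positions. In the $\mathbb{R}$-covered case the translation and rotation numbers of the generators furnish abelian invariants that must be compatible with the slope; combining this with the absence of a global fixed point and with the defining relations, one aims to derive a numerical contradiction for each $p/q\in S$. The non-Hausdorff case requires an additional combinatorial argument controlling the branch points of $\mathcal{T}$ and how individual group elements permute them.

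The main obstacle is precisely this non-Hausdorff analysis: the leaf space can exhibit $Y$-shapes or more complicated branching, and ruling out every configuration for every group element in terms of a handful of generators demands a careful case-by-case study. A secondary difficulty is choosing $K$ and $S$ so that the resulting presentation is tractable; two-bridge knots or small Seifert-filling cases provide the natural test ground in the original argument, and the present paper's pretzel knots $K_s$ extend this strategy to a broader family.
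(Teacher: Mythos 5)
There is a genuine gap: what you have written is a strategy outline, not a proof. The heart of Theorem~\ref{ThmRSS} is precisely the step you defer --- showing that for each chosen filling the fundamental group admits \emph{no} fixed-point-free action on \emph{any} simply connected $1$-manifold, including the non-Hausdorff ones. Your proposal reduces this to ``track $\mathrm{Fix}(a)$ and $\mathrm{Fix}(b)$ and derive a numerical contradiction'' and then concedes that the non-Hausdorff branching analysis is the main obstacle; but that analysis \emph{is} the theorem. Without it you have only the easy part (Thurston's hyperbolic Dehn surgery theorem plus the $H_1$ computation giving infinitely many pairwise distinct hyperbolic manifolds), and no argument excluding Reebless foliations on even one of them. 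The rotation/translation number idea you invoke for the $\mathbb{R}$-covered case is also left entirely unspecified: in the present paper (and in Jun's work) the $\mathbb{R}$-covered case is handled not by rotation numbers but by an explicit word calculation --- the surgery relation $M^pL^q=1$ is converted via Lemma~\ref{lem1} and Fact~\ref{fact1} into an identity such as $k^{p-(4s+7)q}=\bar{l}\bar{c}\bar{l}^s\bar{c}\bar{l}^s\bar{c}\bar{l}$, and order-preservation forces a contradiction with the slope bound. Some concrete substitute for that computation, tied to a specific presentation, is indispensable.

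A secondary point: this statement is quoted from Roberts--Shareshian--Stein and is not proved in the present paper, and their actual examples are Dehn fillings of once-punctured torus bundles over the circle (with prescribed monodromy), not surgeries on a hyperbolic knot in $S^3$; the two-generator presentations and the admissible slope ranges come from that specific family. Your variant --- surgeries on a single knot in $S^3$ --- can in principle work (this is exactly what Jun did for the $(-2,3,7)$-pretzel knot, Theorem~\ref{ThmJun1}, and what this paper does for $K_s$ in the $\mathbb{R}$-covered setting), but it then requires producing an explicit tractable presentation of the knot group with meridian--longitude words (as in Proposition~\ref{presentation_of_Ks}) and carrying out the fixed-point argument for that presentation, including the non-Hausdorff leaf spaces if you want the full Reebless statement. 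As it stands, the proposal names the ingredients but supplies none of the estimates or case analysis that make the theorem true.
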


There is an analogous concept of a Reebless foliation, an essential lamination~\cite{GO}.
If a closed $3$-manifold contains an essential lamination,
it has topological properties which are similar to those with a Reebless foliation.  
In \cite{Fen}, Fenley showed that there exist infinitely many closed hyperbolic $3$-manifolds
which do not admit essential laminations. 

In \cite{Jun}, Jun applied the methods used in the proof of Theorem \ref{ThmRSS}
to prove the following theorem:

\begin{theorem}\label{ThmJun1}{\rm (J.\,Jun\,\cite[Theorem 1]{Jun})} 
Let $K$ be a $(-2,3,7)$-Pretzel knot in $S^3$
and $E_K(p/q)$ be a closed manifold obtained by Dehn surgery along $K$ with slope $p/q$.
If $p/q>18$, $p$ is odd and $p/q\neq 37/2$,
then $E_K(p/q)$ does not contain a Reebless foliation.
\end{theorem}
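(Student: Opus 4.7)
The plan is to argue by contradiction: assume that $E_K(p/q)$ admits a Reebless foliation $\mathcal{F}$. By the theorems of Novikov, Rosenberg and Palmeira recalled in the introduction, the universal cover $\widetilde{E_K(p/q)}$ is homeomorphic to $\mathbb{R}^3$, every lifted leaf is a plane, and the leaf space $\mathcal{T} = \widetilde{E_K(p/q)}/\widetilde{\mathcal{F}}$ is a simply connected (possibly non-Hausdorff) $1$-manifold on which $\pi_1(E_K(p/q))$ acts by homeomorphisms without a global fixed point. The goal is to extract enough algebraic constraints from a carefully chosen presentation of $\pi_1(E_K(p/q))$ to show that every such action must in fact have a global fixed point, producing the desired contradiction.

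First, I would write down a two-generator presentation of the knot group $\pi_1(S^3\setminus K)$ adapted to the Pretzel tangle decomposition of $K$, with generators consisting of a meridian $\mu$ and a second element, subject to a single knot relator; from this I would read off an explicit word for the preferred longitude $\lambda$. A $p/q$ Dehn filling then adds the relation $\mu^{p}\lambda^{q}=1$, so $\pi_1(E_K(p/q))$ becomes a two-generator, two-relator group. The hypotheses $p/q>18$, $p$ odd, and $p/q\neq 37/2$ are used to control the signs and magnitudes of exponents that appear in the surgery word, and the assumption $p$ odd in particular guarantees that the two generators can be chosen compatibly with their images in $H_1(E_K(p/q))$.

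Next, following Roberts--Shareshian--Stein, I would treat each generator as a homeomorphism of $\mathcal{T}$. Any homeomorphism of a simply connected $1$-manifold either has a fixed point or translates each connected segment in a definite direction, and for pairs of elements satisfying commuting or semi-commuting relations these behaviours are forced to be compatible. Using the knot relator and the surgery relator as cyclic words, one propagates fixed-point and translation-direction information from generator to generator around each relator; the point of choosing a \emph{good} presentation is precisely that the relations become short enough, and balanced enough in each generator, for this propagation to close up. The numerical hypotheses on $p/q$ then force a translation direction to be simultaneously positive and negative along some segment of $\mathcal{T}$, or equivalently force a common fixed leaf for all generators---contradicting the absence of a global fixed point.

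The main obstacle is engineering the two-generator presentation so that the surgery word, after substitution into the knot relator, decomposes as a cyclic concatenation of segments each of which is either a power of a single generator or a commutator-type expression; only then does the order structure on $\mathcal{T}$ push consistently around the relator. A secondary technical difficulty is that $\mathcal{T}$ may be non-Hausdorff, so ``fixed point'' must be interpreted as ``fixed branch'' in places, which forces a case analysis according to how the branching of $\mathcal{T}$ meets the orbits of the chosen generators; this is where the exceptional slope $p/q=37/2$ enters, as the combinatorial inequalities underlying the RSS argument degenerate there and would require a separate treatment.
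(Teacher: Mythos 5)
You should first note that the paper does not prove this statement at all: Theorem~\ref{ThmJun1} is quoted verbatim from Jun's paper as background, and the present paper only proves the $\mathbb{R}$-covered analogue (the Main Theorem), so the only available comparison is with the strategy sketched in the introduction and carried out in Section~3 for the much easier case $\mathcal{T}\cong\mathbb{R}$. Measured against that, your proposal is an accurate description of the general Roberts--Shareshian--Stein/Jun strategy, but it is a plan rather than a proof, and the gaps are exactly at the places where the real work lies. You never produce the two-generator presentation, the longitude word, or any concrete identity playing the role that Fact~\ref{fact1} and Lemmas~\ref{lem2}--\ref{lem3} play in this paper's argument (or their counterparts in Jun's and RSS's papers); ``propagating fixed-point and translation-direction information around the relators'' is precisely the step that must be executed word-by-word against an explicit relator, and nothing in your sketch shows it closes up, nor where the bound $p/q>18$ actually comes from. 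More seriously, the whole difficulty that separates the Reebless statement from the $\mathbb{R}$-covered one is that $\mathcal{T}$ is a simply connected, generally non-Hausdorff $1$-manifold, and the analysis of its branching under the group action is the bulk of Jun's proof; you compress this into one sentence about ``fixed branches,'' which is not an argument.

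Two of your attributions of the hypotheses are also off. In this circle of arguments ``$p$ odd'' is used, via Jun's Corollary~7 (cited in the proof of Theorem~\ref{maintheorem} here), to guarantee that the $\pi_1$-action on the leaf space can be taken orientation-preserving --- odd $p$ rules out an index-two subgroup --- not to ``choose generators compatibly with their images in $H_1$.'' And the excluded slope $37/2$ is a boundary slope of the $(-2,3,7)$-pretzel knot where the conclusion genuinely requires separate treatment; your sketch merely predicts that some unspecified inequality degenerates there, which is a guess rather than a derivation. As it stands, the proposal would need the explicit presentation, the explicit surgery/longitude identities, the order-theoretic lemmas, and a full non-Hausdorff case analysis before it could be called a proof of Theorem~\ref{ThmJun1}.
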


In the case of an $\mathbb{R}$-covered foliation,
Jun also proved the following theorem:

\begin{theorem}\label{ThmJun}{\rm (J.\,Jun\,\cite[Theorem 2]{Jun})}
$K$ and $E_K(p/q)$ are the same as {\rm Theorem~\ref{ThmJun1}}.
If $p/q\geqq 10$ and $p$ is odd,
then $E_K(p/q)$ does not contain an $\mathbb{R}$-covered foliation.
\end{theorem}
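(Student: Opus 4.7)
The strategy is to argue by contradiction: assume $E_K(p/q)$ admits an $\mathbb{R}$-covered foliation $\mathcal{F}$, translate this into a non-trivial action of $\pi_1(E_K(p/q))$ on $\mathbb{R}$, and then show that this action is incompatible with a suitably chosen presentation of $\pi_1(E_K(p/q))$.

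First I would use the material recalled in the introduction. An $\mathbb{R}$-covered foliation on $E_K(p/q)$ produces an action $\rho : \pi_1(E_K(p/q)) \to \mathrm{Homeo}(\mathbb{R})$ with no global fixed point. Because $p$ is odd, $H_1(E_K(p/q);\mathbb{Z}) \cong \mathbb{Z}/p$ has odd order, so by passing to the index-two subgroup of orientation-preserving homeomorphisms (if necessary), one may assume $\rho$ takes values in $\mathrm{Homeo}^+(\mathbb{R})$ without losing any generator. Every nontrivial element of $\pi_1(E_K(p/q))$ therefore either has a fixed point in $\mathbb{R}$ or acts by a fixed-point-free homeomorphism that is everywhere increasing or everywhere decreasing.

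Next I would exploit the Pretzel structure of $K = P(-2,3,7)$ to write a two-generator presentation of the knot group of the form $\pi_1(S^3 \setminus K) = \langle a, b \mid r(a,b) \rangle$, where $a, b$ can be chosen as meridians of the two tangles and $r$ is a single relator encoding the Pretzel braiding. The canonical meridian $\mu$ and longitude $\lambda$ are then words in $a, b$, and the Dehn-filled group is
\[
\pi_1(E_K(p/q)) \;=\; \langle a, b \mid r(a,b),\; \mu^p \lambda^q \rangle.
\]
The presentation is chosen so that the words appearing in $r$ and in $\mu^p\lambda^q$ are built from few enough subwords to allow an explicit combinatorial analysis of fixed-point sets in $\mathbb{R}$.

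The core of the argument, following the Roberts--Shareshian--Stein template, is then to pick a base point $x_0 \in \mathbb{R}$ and track the intervals on which $\rho(a)$, $\rho(b)$, and their various products move points to the right or left. The relation $\mu^p\lambda^q = 1$ forces the composite $\rho(\mu)^p \rho(\lambda)^q$ to fix $x_0$, and the condition $p/q \geq 10$ guarantees enough iterations of $\rho(\mu)$ to pin down the relative order of several key fixed-point sets. A finite case analysis on the order types of $\mathrm{Fix}(\rho(a))$, $\mathrm{Fix}(\rho(b))$, and $\mathrm{Fix}(\rho(ab^{\pm 1}))$ against $x_0$ should show that every configuration is inconsistent with the Pretzel relator $r(a,b)$.

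The hard part will be the interval combinatorics, namely selecting $x_0$ and enumerating these order types so that each case closes up under the threshold $p/q \geq 10$. The bound $10$ is expected to be sharp for this method: weakening it would require excluding exceptional slopes just as Theorem~\ref{ThmJun1} excludes $37/2$. I anticipate that generalizing this analysis to the full family $(-2,3,2s+1)$ (which the bulk of the present paper will do) is exactly where the threshold $4s+7$ emerges in place of $10$.
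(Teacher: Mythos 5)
Your high-level frame matches the paper's: argue by contradiction from an $\mathbb{R}$-covered foliation to an action of $\pi_1(E_K(p/q))$ on $\mathbb{R}$ with no global fixed point, reduce to orientation-preserving actions using $p$ odd (any homomorphism to $\mathbb{Z}/2$ factors through $H_1\cong\mathbb{Z}/p$ of odd order; this is exactly the role of Jun's Corollary 7 cited in the paper), and work with a two-generator, one-relator presentation of the knot group together with the filling relation $\mu^p\lambda^q$. Note, though, that the statement you were asked to prove is quoted from Jun and is not reproved here; the paper's own argument (Section 3) is the adaptation of Jun's proof to $K_s$, so that is the relevant comparison.

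The genuine gap is in the core step, which you leave as ``a finite case analysis on the order types of $\mathrm{Fix}(\rho(a))$, $\mathrm{Fix}(\rho(b))$, $\mathrm{Fix}(\rho(ab^{\pm1}))$ \ldots should show that every configuration is inconsistent.'' That enumeration of configurations is the machinery of the \emph{Reebless} case (Theorem~\ref{ThmJun1}, after Roberts--Shareshian--Stein), and it is never carried out for the $\mathbb{R}$-covered statement; moreover, as described it contains no identifiable mechanism by which the threshold $p/q\geq 10$ enters. What actually makes the $\mathbb{R}$-covered argument close is a specific algebraic device you do not mention: since $M$ and $L$ commute and $\gcd(p,q)=1$, there is $k$ with $M=k^q$, $L=k^{-p}$ (Lemma~\ref{lem1}); substituting the explicit longitude word into $L=k^{-p}$ yields an identity of the form $k^{p-Nq}=w$, where $w$ is an explicit word in $\bar c,\bar l$ and $N$ is the slope bound (Fact~\ref{fact1}, with $N=4s+7$ here, $N=10$ in Jun's presentation). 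One then shows: if no global fixed point exists, one may assume $xk>x$ for all $x$, hence $xc>x$; a short manipulation of the palindromic relator (rewritten as $clcl^{s-1}clc=lcl^scl$) forces $xl>x$ for all $x$ (Lemma~\ref{lem3}); and then $w^{-1}$ is a strictly increasing word, so $xk^{Nq-p}>x$, giving $Nq-p>0$, i.e.\ $p/q<N$, contradicting $p/q\geq N$. Without producing $k$, the derived relation $k^{p-Nq}=w$, and the monotonicity lemmas, your plan defers precisely the heart of the proof. Two smaller points: the exceptional slope $37/2$ belongs to the Reebless theorem, not to this statement, so your sharpness remark is misplaced; and the bound is an artifact of the chosen presentation and longitude word (the present paper's presentation gives $4s+7$, which for $(-2,3,7)$ is $19$, weaker than Jun's $10$), so it is not ``sharp for this method'' in any presentation-independent sense.
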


In this paper,
we shall prove a theorem which is an extension of Theorem~\ref{ThmJun}
to the case of $(-2,3,2s+1)$-type Pretzel knot ($s\geqq 3$) as follows.

\begin{theorem}\label{maintheorem}{\rm (Main Theorem)}
Let $K_s$ be a $(-2,3,2s+1)$-type Pretzel knot in $S^3$ {\rm ($s\geqq 3$)}.
If $q>0$, $p/q\geqq 4s+7$ and $p$ is odd,
then $E_{K_s}(p/q)$ does not contain an $\mathbb{R}$-covered foliation.
\end{theorem}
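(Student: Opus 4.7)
\medskip

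\noindent\textbf{Proof proposal.}
The plan is to follow the template developed by Roberts--Shareshian--Stein and refined by Jun for the $(-2,3,7)$ case, but replacing their explicit group--theoretic computations with ones that are parametrised by $s$. The whole argument is by contradiction: assume that $E_{K_s}(p/q)$ admits an $\mathbb{R}$-covered foliation $\mathcal{F}$ and use the resulting $\pi_1$-action on the leaf space to force an inequality on $p/q$ that contradicts the hypothesis $p/q\geqq 4s+7$.

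The first step, and the most delicate one, is to write down a \emph{good presentation} of $\pi_1(E_{K_s}(p/q))$, as advertised by the title. Starting from a two--bridge style / Wirtinger presentation of the knot group $\pi_1(S^3\setminus K_s)$ and adding the Dehn filling relation $\mu^p\lambda^q=1$, I would aim for a presentation on two generators $a,b$ (chosen close to the meridians of the $-2$ and $2s+1$ tangles) with a single long relation $W_s(a,b)=1$ coming from the pretzel structure, together with a surgery relation in which the exponents of $a$ and $b$ are linear in $p,q$ and $s$. The key qualitative feature this presentation must have is that each syllable of $W_s$ has a controlled sign pattern, so that when the generators are realised as orientation--preserving homeomorphisms of $\mathbb{R}$ the word $W_s(a,b)$ can be pushed past a given point only a bounded number of times.

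Once the presentation is in place, I would assume $\mathcal{F}$ is an $\mathbb{R}$-covered foliation and use the facts recalled in the introduction: $\pi_1(E_{K_s}(p/q))$ acts on the leaf space $\mathcal{T}\cong\mathbb{R}$ by orientation--preserving homeomorphisms without a global fixed point, and this action lifts/refines through the presentation above. Writing $A=\rho(a)$, $B=\rho(b)$ in $\mathrm{Homeo}^+(\mathbb{R})$, I would split into cases according to the structure of $\mathrm{Fix}(A)$ and $\mathrm{Fix}(B)$: either both are empty, or at least one is nonempty. In each case, by translating if necessary, one chooses a test point $x_0\in\mathbb{R}$ and tracks the orbit of $x_0$ under the letters of $W_s$; the sign--pattern in $W_s$ will guarantee that the orbit is monotone, giving either $W_s(A,B)(x_0)>x_0$ always, or $<x_0$ always. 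This contradicts $W_s(A,B)=\mathrm{id}$ unless the action has a global fixed point, finishing the "word from the knot group" part.

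The final, and arithmetically tightest, step is to feed the surgery relation into the same analysis. The surgery relation, expressed in $a,b$, will be a word whose net $a$- and $b$-exponents are linear expressions in $p,q,s$; after the previous step has reduced us to the situation where $A$ and $B$ commute up to an element acting with a definite sign, the requirement that the surgery word be the identity becomes a numerical inequality of the shape $p/q<4s+7$, the sharp complement to the hypothesis. This produces the desired contradiction. I expect the main obstacle to be the first step: finding a two--generator presentation whose relator $W_s$ has the uniform sign--pattern needed for the orbit--tracking argument across all $s\geqq 3$, rather than only at $s=3$ where Jun could verify it by hand; the rest of the argument is a parameter--tracked version of his, and the case $p/q=37/2$ excluded in Theorem~\ref{ThmJun1} does not reappear here because only $\mathbb{R}$-coveredness, and not general Reeblessness, is being obstructed.
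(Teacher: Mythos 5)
Your overall template (leaf-space action on $\mathbb{R}$, two-generator one-relator presentation, feed in the surgery data to force a slope inequality) is indeed the one the paper follows, but the central mechanism you propose for the intermediate step cannot work. You claim that the knot-group relator $W_s$ will have a sign pattern making the orbit of a test point monotone, so that $W_s(A,B)=\mathrm{id}$ already forces a global fixed point. This is impossible: knot groups are locally indicable, hence left-orderable, so $\pi_1(S^3\setminus K_s)$ admits faithful, fixed-point-free actions on $\mathbb{R}$ by orientation-preserving homeomorphisms in which the relator is of course the identity; no argument using only the knot-group relator can produce a global fixed point. Concretely, the relator obtained in the paper, $clc\bar{l}\bar{c}\,\bar{l}^s\bar{c}\bar{l}\,clc\,l^{s-1}$, has mixed signs (as it must, being balanced in homology), so no monotone orbit-tracking of $W_s$ itself is available.

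What the actual proof uses instead is an interplay of ingredients your outline does not isolate. First, since $M=c$ and $L$ commute, there is $k$ with $M=k^q$, $L=k^{-p}$, and --- crucially --- the longitude, after being simplified \emph{using the relator}, becomes $\bar{c}^{2s-2}lcl^scl^scl\,\bar{c}^{2s+9}$, which yields $k^{(4s+7)q-p}=lcl^scl^scl$, a \emph{positive} word (Fact~\ref{fact1}); the positivity that drives the monotonicity argument lives in this combined meridian--longitude element, not in $W_s$. Second, one needs the transfer lemma (Lemma~\ref{lem3}): if every point is moved up by $k$ (hence by $c$), then every point is moved up by $l$; its proof rearranges the relator as $clcl^{s-1}clc=lcl^scl$ and uses that $clc$ is a surjective homeomorphism of $\mathbb{R}$ --- quite different from sign-tracking along $W_s$. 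Third, the degenerate cases where $k$ or $l$ has a fixed point are handled separately (Lemma~\ref{lem2}), again via the identity of Fact~\ref{fact1}, and the hypothesis that $p$ is odd is what permits (via Jun's Corollary~7) the reduction to orientation-preserving actions on the leaf space, a reduction your sketch assumes without justification. With these in place, the positive word gives $(4s+7)q-p>0$, contradicting $p/q\geqq 4s+7$; without them, your outline has no route from the presentation to a global fixed point.
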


These theorems are proved by a similar strategy as follows.
Let $M$ be a closed $3$-manifold and $\mathcal{F}$ be a Reebless foliation in $M$.
Then, as we stated before,
the fundamental group $\pi_1(M)$ acts on the leaf space $\mathcal{T}$ of $\mathcal{F}$
as an orientation preserving homeomorphism
which has no global fixed point.
By the theorem of Palmeira,
for two Reebless foliations $\mathcal{F}$ and $\mathcal{F}'$
there is a diffeomorphism $f:M\to M$ which maps $\mathcal{F}$ to $\mathcal{F}$
if and only if
there is a diffeomorphism $\tilde{f}:\mathcal{T} \to \mathcal{T}'$,
where $\mathcal{T}$ and $\mathcal{T}'$ are the leaf spaces of $\mathcal{F}$ and $\mathcal{F}'$ respectively.
Therefore,
for any simply connected $1$-manifold $\mathcal{T}$,
if there exists a point of $\mathcal{T}$ which is fixed by any action of $\pi_1(M)$
then $M$ cannot contain a Reebless foliation.

In order to use above method to prove our main theorem,
we will need an explicit presentation of the fundamental group.
Moreover, it is better for proving our theorem that its presentation has simpler form
because our investigation of existence of a global fixed point becomes easy
if its presentation has fewer generators.

This paper is constructed as follows.
In Section 2, we shall present a method of an explicit calculation to obtain a good presentation
of the fundamental group
of the exterior of the knot $K_s$ and the elements which represent a meridian and a longitude.
Its presentation has two generators and one relator, and this property comes from the fact that
the knot $K_s$ is a tunnel number one knot.
In Section 3, by using the good presentation obtained in Section 2
we shall prove the main theorem by comparing to procedures of the proof of Theorem~\ref{ThmJun} and Theorem~\ref{ThmRSS}.
In Section 4, we will discuss topics and problems related to our theorem,
especially a left-orderable group and its properties.

\section{an explicit presentation of the fundamental group}

As mentioned in Section 1, we need a good presentation of a fundamental group and
a meridian-longitude pair in order to make a proof of our theorem easier.
In the proof of \cite{Jun}, Jun uses the presentation of a knot group of $(-2,3,7)$-pretzel knot
which obtained by the computer program, SnapPea~\cite{W}.
Let $K_s$ be a $(-2,3,2s+1)$-type Pretzel knot in $S^3$.
In order to obtain a good presentation of the knot group of $K_s$ and its meridian-longitude pair,
we take the following procedure.

We first notice that $K_s$ is a tunnel number one knot for all $s\geqq 3$
by the theorem of Morimoto, Sakuma and Yokota \cite{MSY}.
A knot $K$ is called a tunnel number one knot
if there is an arc $\tau$ in $S^3$
which intersects $K$ only on its endpoints
and the closure of $S^3\setminus (K\cup \tau)$ is homeomorphic to a genus two handlebody.
Therefore the knot group of $K_s$ can have a presentation which has two generators and one relator.

It is well known that two groups $G$ and $G'$ are isomorphic
if there is a sequence of Tietze transformations such that
a presentation of $G$ is transformed into its of $G'$ along this sequence.
Although it is generally difficult to find such a sequence,
we can find the required sequence
by applying the procedure which appeared in the paper of Hilden, Tejada and Toro \cite{HTT} as follows.
At the first step, we obtain the Wirtinger presentation $G_1$ of the knot $K$.
Then we collapse one crossing of the knot diagram and get a graph $\Gamma$ which is thought
as a resulting object $K\cup \tau$
because the exteriors of $\Gamma$ and $K\cup \tau$ in $S^3$ are homeomorphic.
We modify $\Gamma$ with local moves in sequence forward to the shape $S^1 \vee S^1$,
and in the same time we modify the presentations by a Tietze transformation which corresponds to each local move.
In the sequel we finally obtain the graph which is homeomorphic to $S^1 \vee S^1$ and the corresponding presentation
which has two generators and one relator.

In order to apply this procedure to the case of $K_s$, we add some new local moves which are not treated in \cite{HTT},
and we refer the sequence of modifications which appeared in the paper of Kobayashi \cite{Kob}
to obtain our sequence of modifications.

In the next subsection, we will enumerate the local moves and the correspondence
between Tietze transformations and these local moves.

\subsection{The list of local moves and corresponding Tietze transformations}

Before we mention how to make the sequence of modifications of $K_s$,
we make a list of the following modifications and corresponding Tietze transformations
which will be used in our sequence.
In this list,
the corkscrew move and the open and collapsing move is originally in ~\cite{HTT},
and the others are new moves which are used in our calculation.
In the descriptions below, a formula which looks like
`` $r_1:ab=cd$ '' is a relator modified by a Tietze transformation
corresponds to a local move indicated in these figures.
These moves correspond to Tietze transformations which increase or decrease a number of generators,
but we do not indicate which generator is added or eliminated in this figures. 

\vspace{1mm}
\noindent{\small \fbox{corkscrew move}} %corkscrew move
\begin{center}
\begin{minipage}[h]{25mm}
	\centerline{\includegraphics[keepaspectratio]{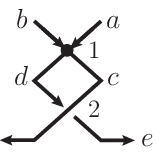}}
\end{minipage}
\begin{minipage}[h]{20mm}
\begin{tabular}{ll}
${\rm r_1:}\; ab=cd$ \\
${\rm r_2:}\; ec=cd$
\end{tabular}
\end{minipage}
\begin{minipage}[h]{20mm}
	\centerline{$\Longleftrightarrow$}
\end{minipage}
\begin{minipage}[h]{25mm}
	\centerline{\includegraphics[keepaspectratio]{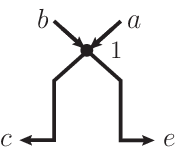}}
\end{minipage}
\begin{minipage}[h]{20mm}
\begin{tabular}{ll}
${\rm r_1:}\; ab=ec$
\end{tabular}
\end{minipage}
\end{center}

\noindent{\small \fbox{open and collapsing move}} %open collapsing move
\begin{center}
\begin{minipage}[h]{25mm}
	\centerline{\includegraphics[keepaspectratio]{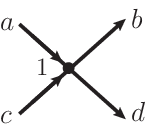}}
\end{minipage}
\begin{minipage}[h]{20mm}
\begin{tabular}{ll}
${\rm r_1:}\; bd=ac$
\end{tabular}
\end{minipage}
\begin{minipage}[h]{20mm}
	\centerline{$\Longleftrightarrow$}
\end{minipage}
\begin{minipage}[h]{25mm}
	\centerline{\includegraphics[keepaspectratio]{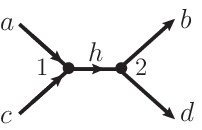}}
\end{minipage}
\begin{minipage}[h]{20mm}
\begin{tabular}{ll}
${\rm r_1:}\; h=ac$ \\
${\rm r_2:}\; bd=h$
\end{tabular}
\end{minipage}
\end{center}

\noindent{\small \fbox{upper sliding move}} %upper sliding move
\begin{center}
\begin{minipage}[h]{25mm}
	\centerline{\includegraphics[keepaspectratio]{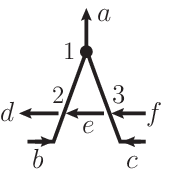}}
\end{minipage}
\begin{minipage}[h]{20mm}
\begin{tabular}{ll}
${\rm r_1:}\; a=bc$ \\
${\rm r_2:}\; db=be$ \\
${\rm r_3:}\; ec=cf$
\end{tabular}
\end{minipage}
\begin{minipage}[h]{20mm}
	\centerline{$\Longleftrightarrow$}
\end{minipage}
\begin{minipage}[h]{25mm}
	\centerline{\includegraphics[keepaspectratio]{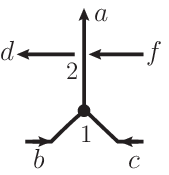}}
\end{minipage}
\begin{minipage}[h]{20mm}
\begin{tabular}{ll}
${\rm r_1:}\; a=bc$ \\
${\rm r_2:}\; da=af$
\end{tabular}
\end{minipage}
\end{center}

\noindent{\small \fbox{under sliding move}} %under sliding move
\begin{center}
\begin{minipage}[h]{25mm}
	\centerline{\includegraphics[keepaspectratio]{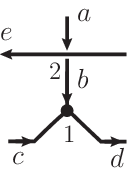}}
\end{minipage}
\begin{minipage}[h]{20mm}
\begin{tabular}{ll}
${\rm r_1:}\; d=bc$ \\
${\rm r_2:}\; be=ea$
\end{tabular}
\end{minipage}
\begin{minipage}[h]{20mm}
	\centerline{$\Longleftrightarrow$}
\end{minipage}
\begin{minipage}[h]{25mm}
	\centerline{\includegraphics[keepaspectratio]{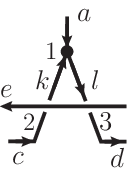}}
\end{minipage}
\begin{minipage}[h]{20mm}
\begin{tabular}{ll}
${\rm r_1:}\; l=ak$ \\
${\rm r_2:}\; ce=ek$ \\
${\rm r_3:}\; de=el$ \\
\end{tabular}
\end{minipage}
\end{center}

\noindent{\small \fbox{turning move}} %turning move
\begin{center}
\begin{minipage}[h]{25mm}
	\centerline{\includegraphics[keepaspectratio]{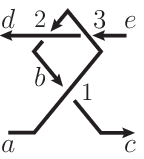}}
\end{minipage}
\begin{minipage}[h]{20mm}
\begin{tabular}{ll}
${\rm r_1:}\; ba=ac$ \\
${\rm r_2:}\; bd=da$ \\
${\rm r_3:}\; da=ae$ 
\end{tabular}
\end{minipage}
\begin{minipage}[h]{20mm}
	\centerline{$\Longleftrightarrow$}
\end{minipage}
\begin{minipage}[h]{25mm}
	\centerline{\includegraphics[keepaspectratio]{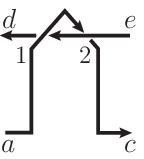}}
\end{minipage}
\begin{minipage}[h]{20mm}
\begin{tabular}{ll}
${\rm r_1:}\; da=ae$ \\
${\rm r_2:}\; ce=ea$
\end{tabular}
\end{minipage}
\end{center}

\subsection{The calculation of the presentation of the fundamental group}

In order to simplify the explanation of how to construct the sequence of modifications of $K_s$
and corresponding presentations, we divide these sequences into the several steps.

\noindent\fbox{STEP 1}
We fix an orientation of $K_s$ as in Figure~\ref{fig_G_1}. 
Then we take a Wirtinger presentation $G_1$ of $\pi_1(S^3\setminus K_s)$ as follows.

\begin{tabular}{lllllll}
$G_1=$ & $\langle$ & \multicolumn{4}{l}{$a,b,c,d,e,f,f_1,f_2,\ldots,f_{2s-2},f_{2s-1},g \;\mid\;$} & \\
 & & ${\rm r_1:}\; ca=ad$ & ${\rm r_2:}\; ac=cb$ & ${\rm r_3:}\; df=fe$ & ${\rm r_4:}\; fe=ec$ & \\
 & & ${\rm r_5:}\; ec=cg$ & ${\rm r_6:}\; f_1 a=af$ & ${\rm r_7:}\; f_2f_1=f_1a$ & ${\rm r_8:}\; f_3f_2=f_2f_1$ & \\
 & & $\phantom{\rm r_7:}\; \vdots$ & \\
 & & \multicolumn{4}{l}{${\rm r_{2s+4}:}\; f_{2s-1}f_{2s-2}=f_{2s-2}f_{2s-3}$ \;\; ${\rm r_{2s+5}:}\; gf_{2s-1}=f_{2s-1}f_{2s-2}$} & \\
 & & \multicolumn{4}{l}{${\rm r_{2s+6}:}\; bg=gf_{2s-1}$} & $\rangle$
\end{tabular}

\begin{figure}[h]
	\centerline{\includegraphics[keepaspectratio]{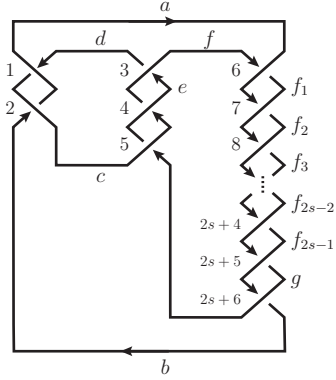}}
\caption{$K_s$ with labels of arcs and crossing points}
\label{fig_G_1}
\end{figure}

In the presentation,
each generator corresponds to the label of each arc and
each relator ${\rm r_i}$ to its of each crossing respectively.

\noindent\fbox{STEP 2}
Next we collapse the crossing labeled $6$ and obtain the graph as Figure~\ref{fig_G_gamma_G_3-2s} (a).
In the viewpoint of a complement,
$S^3\setminus \Gamma$ is homeomorphic to $S^3\setminus (K_s\cup\tau)$
where $\tau$ is the tunnel attached at the crossing labeled as $6$.
Taking this modification of $K_s$ into $\Gamma$,
we take the following presentation $G_2$
which corresponds to the result of the Tietze transformation
that we add the generator $f_0$ and the relator $f_0\bar{a}$
(we denote $a^{-1}=\bar{a}$, and we label it by ${\rm r}_{\infty}$),
and we also modify two relators ${\rm r_6}$ and ${\rm r_7}$ which contain the generator $a$.
Because of the addition of the relator $f_0\bar{a}$, we can erase the new generator $f_0$ by using this relator.
Therefore $G_2$ is isomorphic to $G_1$.

\begin{tabular}{lllllll}
$G_2=$ & $\langle$ & \multicolumn{4}{l}{$a,b,c,d,e,f,{f_0},f_1,f_2,\ldots,f_{2s-2},f_{2s-1},g \;\mid\;$} & \\
 & & ${\rm r_1:}\; ca=ad$ & ${\rm r_2:}\; ac=cb$ & ${\rm r_3:}\; df=fe$ & ${\rm r_4:}\; fe=ec$ & \\
 & & ${\rm r_5:}\; ec=cg$ & ${\rm r_6:}\; f_1  {f_0}=af$ & ${\rm r_7:}\; f_2f_1=f_1  {f_0}$ & ${\rm r_8:}\; f_3f_2=f_2f_1$ & \\
 & & $\phantom{\rm r_7:}\; \vdots$ \\
 & & \multicolumn{4}{l}{${\rm r_{2s+4}:}\; f_{2s-1}f_{2s-2}=f_{2s-2}f_{2s-3}$ \;\; ${\rm r_{2s+5}:}\; gf_{2s-1}=f_{2s-1}f_{2s-2}$} & \\
 & & \multicolumn{4}{l}{${\rm r_{2s+6}:}\; bg=gf_{2s-1}$ \;\; ${\rm r_{\infty}:}\;  f_0\bar{a}$} & $\rangle$ 
\end{tabular}

\noindent\fbox{STEP 3}
We make a corkscrew move to the pair of crossings $6$ and $7$,
and iterate this operation to successive crossings.
Then we obtain Figure~\ref{fig_G_gamma_G_3-2s} (b) and the corresponding presentation $G_{3,2s}$ as follows:

\begin{tabular}{lllllll}
$G_{3,2s}=$ & $\langle$ & \multicolumn{4}{l}{$a,b,c,d,e,f,g \,\mid\;$} & \\
 & & ${\rm r_1:}\; ca=ad$ & ${\rm r_2:}\; ac=cb$ & ${\rm r_3:}\; df=fe$ & ${\rm r_4:}\; fe=ec$ & \\
 & & ${\rm r_5:}\; ec=cg$ & ${\rm r_6:}\;  {bg}=af$ & \multicolumn{2}{l}{${\rm r_{\infty}:}\;  {(\bar{g}\bar{b})^s g(bg)^s \bar{a}}$} & $\rangle$
\end{tabular}

\begin{figure}[h]
	\centerline{\includegraphics[keepaspectratio]{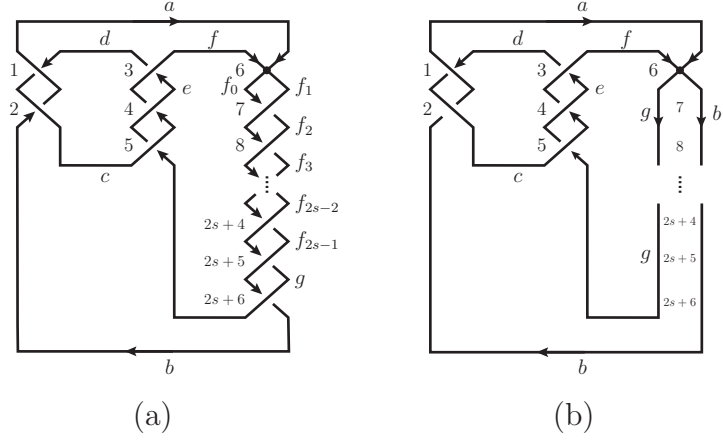}}
\caption{(a) $\Gamma$ with one tunnel and (b) the result of iterations of a corkscrew move}
\label{fig_G_gamma_G_3-2s}
\end{figure}

We will explain these operations precisely as follows.
As we mentioned above, we first make a corkscrew move to the pair $6$ and $7$.
This operation causes that every generator $f_0$ which appeared in the relators is replaced by $\bar{f}_1 f_2 f_1$
obtained from the relator ${\rm r_7:}\; f_2 f_1=f_1 f_0$.
Then $f_0$ is erased in the generators and all relators.
The resultant presentation is denoted by $G_{3,1}$ and the relator $r_{\infty}$ by $R_1:\bar{f}_1f_2f_1\bar{a}$
temporarily in this step.

Next we make a corkscrew move to the pair $6$ and $8$ successively,
then we get the presentation $G_{3,2}$ which loses one generator $f_1$ by using the relator ${\rm r_8:}\;f_3f_2=f_2f_1 \Leftrightarrow f_1=\bar{f}_2 f_3 f_2$, and we have $R_2:\bar{f}_2 \bar{f}_3 f_2 f_3 f_2\bar{a}$.

By repeating this operation,
we get the sequence of presentations $G_{3,3}$, $G_{3,4}$, $G_{3,5}$, $\cdots$ which have the specific relators
\begin{align*}
R_3&:\bar{f}_3\bar{f}_4\bar{f}_3 f_4 f_3 f_4 f_3\bar{a}, \\
R_4&:\bar{f}_4\bar{f}_5\bar{f}_4\bar{f}_5 f_4 f_5 f_4 f_5 f_4 \bar{a}, \\
R_5&:\bar{f}_5\bar{f}_6\bar{f}_5\bar{f}_6\bar{f}_5 f_6 f_5 f_6 f_5 f_6 f_5 \bar{a}, \\
&\cdots .
\end{align*}

By this observation,
we can assume that the specific relator $R_i$ in the presentation $G_{3,i}$ has the following formulae.

When $i$ is odd, let $j=1,2,3,\cdots$, and we set
\begin{align*}
R_i=R_{2j-1}&=
\left(\prod_{n=1}^{j-1} \left(\bar{f}_{2j-1}\bar{f}_{2j}\right)\right)\bar{f}_{2j-1}\left(\prod_{n=1}^{j}\left(f_{2j}f_{2j-1}\right)\right)\bar{a}\, ,
\end{align*}
and in this case if $j=1$ we assume that the part $\prod_{n=1}^{j-1} \left(\bar{f}_{2j-1}\bar{f}_{2j}\right)$ is equal to identity.

When $i$ is even, let $j=1,2,\cdots$, and we set
\begin{align*}
R_i=R_{2j}&=
\left(\prod_{n=1}^{j}\left(\bar{f}_{2j}\bar{f}_{2j+1}\right)\right)f_{2j}\left(\prod_{n=1}^{j}\left(f_{2j+1}f_{2j}\right)\right)\bar{a}\, .
\end{align*}

In order to verify this formulae, we take induction as follows.
We assume that the formula $R_i$ is correct when $i=2k-1$ is odd.
Then we see the relator ${\rm r}_{i+7}: f_{i+2}f_{i+1}=f_{i+1}f_i \Leftrightarrow f_i=\bar{f}_{i+1}f_{i+2}f_{i+1}$
in the presentation $G_{3,i}$,
and it follows $f_{2k-1}=\bar{f}_{2k}f_{2k+1}f_{2k}$.
By making a corkscrew move to the pair $6$ and $6+(i+1)$, we replace these $f_{2k-1}$ and $\bar{f}_{2k-1}$ which
appeared in the relations of $G_{3,i}$.
In particular,
\begin{align*}
R_i&=R_{2k-1} \\
&=\left(\prod_{n=1}^{k-1} \left(\bar{f}_{2k-1}\bar{f}_{2k}\right)\right)\bar{f}_{2k-1}\left(\prod_{n=1}^{k}\left(f_{2k}f_{2k-1}\right)\right)\bar{a} \\
&\rightarrow \left(\prod_{n=1}^{k-1}\left(\left(\bar{f}_{2k}\bar{f}_{2k+1}f_{2k}\right)\bar{f}_{2k}\right)\right)\left(\bar{f}_{2k}\bar{f}_{2k+1}f_{2k}\right)\left(\prod_{n=1}^{k}\left(f_{2k}\left(\bar{f}_{2k}f_{2k+1}f_{2k}\right)\right)\right)\bar{a} \\
&=\left(\prod_{n=1}^{k-1}\left(\bar{f}_{2k}\bar{f}_{2k+1}\right)\right)\bar{f}_{2k}\bar{f}_{2k+1}f_{2k}\left(\prod_{n=1}^{k}\left(f_{2k+1}f_{2k}\right)\right)\bar{a} \\
&=\left(\prod_{n=1}^{k}\left(\bar{f}_{2k}\bar{f}_{2k+1}\right)\right)f_{2k}\left(\prod_{n=1}^{k}\left(f_{2k+1}f_{2k}\right)\right)\bar{a} \\
&=R_{2k}=R_{i+1}.
\end{align*}

For the case when $i$ is even, we can also verify the formula of $R_i=R_{2k}$ similarly.

Next we have to observe the last three steps $G_{3,2s-2}$, $G_{3,2s-1}$ and $G_{3,2s}$.
By the above formula,
$G_{3,2s-2}$ has the specific relator
$$R_{2s-2}=
\left(\prod_{n=1}^{s-1}\left(\bar{f}_{2s-2}\bar{f}_{2s-1}\right)\right)f_{2s-2}\left(\prod_{n=1}^{s-1}\left(f_{2s-1}f_{2s-2}\right)\right)\bar{a}.$$
We make a corkscrew move to the pair $6$ and $2s+5$
using the relator ${\rm r}_{2s+5}:g f_{2s-1}=f_{2s-1}f_{2s-2}$,
which is equivalent to $f_{2s-2}=\bar{f}_{2s-1}g f_{2s-1}$ and $\bar{f}_{2s-2}=\bar{f}_{2s-1}\bar{g}f_{2s-1}$.
Then we obtain the presentation $G_{3,2s-1}$ with the specific relator
$$R_{2s-1}=
\left(\prod_{n=1}^{s-1}\left(\bar{f}_{2s-1}\bar{g}\right)\right)\bar{f}_{2s-1}\left(\prod_{n=1}^{s}\left(g f_{2s-1}\right)\right)\bar{a}.$$

At last we make a corkscrew move to $6$ and $2s+6$ using the relator ${\rm r}_{2s+6}:bg=gf_{2s-1}$
which is equivalent to $f_{2s-1}=\bar{g}bg$ and $\bar{f}_{2s-1}=\bar{g}\bar{b}g$.
Then we obtain the presentation $G_{3,2s}$ with the specific relator $R_{2s}$ as follows:
\begin{align*}
R_{2s}
&=\left(\prod_{n=1}^{s-1}\left(\left(\bar{g}\bar{b}g\right)\bar{g}\right)\right)\left(\bar{g}\bar{b}g\right)\left(\prod_{n=1}^{s}\left(g\left(\bar{g}bg\right)\right)\right)\bar{a} \\
&=\left(\prod_{n=1}^{s-1}\left(\bar{g}\bar{b}\right)\right)\left(\bar{g}\bar{b}g\right)\left(\prod_{n=1}^{s}\left(bg\right)\right)\bar{a} \\
&=\left(\prod_{n=1}^{s}\left(\bar{g}\bar{b}\right)\right)g\left(\prod_{n=1}^{s}\left(bg\right)\right)\bar{a} \\
&=\left(\bar{g}\bar{b}\right)^s g\left(bg\right)^s \bar{a}.
\end{align*}

\noindent\fbox{STEP 4}
Before starting the next step,
we remark that the graph in Figure~\ref{fig_G_gamma_G_3-2s} (b) is equivalent to Figure~\ref{fig_G_3-2s_equiv}.

\begin{figure}[h]
	\centerline{\includegraphics[keepaspectratio]{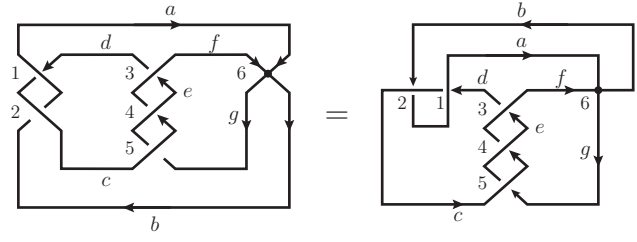}}
\caption{the graph before step 4}
\label{fig_G_3-2s_equiv}
\end{figure}

We make a opening move to the point $6$ and obtain the graph in Figure~\ref{fig_G_4_5_6} (a).
This operation corresponds to Tietze transformations so that
one generator $h$ is added to the generators and
one new relation $h=af$ is added to the relations.
And we replace the element $af$ which appeared in the relator ${\rm r_6}$ by $h$,
replace $\bar{g}\bar{b}$ by $\bar{h}$
and $bg$ by $h$ in the relator ${\rm r_{\infty}}$.
Then we obtain the presentation $G_4$ as follows:

\begin{tabular}{lllllll}
$G_{4}=$ & $\langle$ & \multicolumn{4}{l}{$a,b,c,d,e,f,g,h \,\mid\;$} & \\
 & & ${\rm r_1:}\; ca=ad$ & ${\rm r_2:}\; ac=cb$ & ${\rm r_3:}\; df=fe$ & ${\rm r_4:}\; fe=ec$ & \\
 & & ${\rm r_5:}\; ec=cg$ & ${\rm r_6:}\;  {bg}=h$ & ${\rm r_7:}\;  h=af$ & ${\rm r_{\infty}:}\;  \bar{h}^s gh^s \bar{a}$ & $\rangle$.
\end{tabular}

\noindent\fbox{STEP 5}
Next we apply an upper sliding move to the triplet $7$, $3$ and $1$,
and obtain the graph in Figure~\ref{fig_G_4_5_6} (b).
In this modification,
the element $d$ is replaced by $\bar{a}ca$ obtained from the relation ${\rm r_1:}\; ca=ad$,
and $d$ is eliminated from the generators.
Then we obtain the presentation

\begin{tabular}{lllllll}
$G_{5}=$ & $\langle$ & \multicolumn{4}{l}{$a,b,c,e,f,g,h \,\mid\;$} & \\
 & & ${\rm r_2:}\; ac=cb$ & ${\rm r_3:}\; ch=he$ & ${\rm r_4:}\; fe=ec$ & ${\rm r_5:}\; ec=cg$ & \\
 & & ${\rm r_6:}\; {bg}=h$ & ${\rm r_7:}\; h=af$ & \multicolumn{2}{l}{${\rm r_{\infty}:}\;  \bar{h}^s gh^s \bar{a}$} & $\rangle$.
\end{tabular}

\noindent\fbox{STEP 6}
We make an under sliding move to the pair $7$ and $2$ and obtain the graph in Figure~\ref{fig_G_4_5_6} (c),
then the generator $a$ is eliminated and two generators $k$ and $l$ are added.
This operation is divided into the following two steps.
In the first step, by using the relator ${\rm r}_7: h=af$ $\Leftrightarrow$ $a=h\bar{f}$,
the generator $a$ which appeared in the relations ${\rm r}_2$ and ${\rm r}_{\infty}$ is replaced by $h\bar{f}$.
Next we add two generators $k$ and $l$, and two relations ${\rm r_8}:\;fc=ck$ and ${\rm r_9:}\;hc=cl$.
Then we obtain the presentation

\begin{tabular}{lllllll}
${G_6}'=$ & $\langle$ & \multicolumn{4}{l}{$b,c,e,f,g,h,k,l \,\mid\;$} & \\
 & & ${\rm r_2:}\; h\bar{f}c=cb$ & ${\rm r_3:}\; ch=he$ & ${\rm r_4:}\; fe=ec$ & ${\rm r_5:}\; ec=cg$ & \\
 & & ${\rm r_6:}\; {bg}=h$ & ${\rm r_8:}\;fc=ck$ & ${\rm r_9:}\;hc=cl$ & ${\rm r_{\infty}:}\;  \bar{h}^s gh^s (f\bar{h})$ & $\rangle$.
\end{tabular}

By using the relations ${\rm r_8:}\;fc=ck$ $\Leftrightarrow$ $c=\bar{f}ck$ and $c\bar{k}=\bar{f}c$,
and ${\rm r_9:}\; hc=cl$,
we rewrite the relation ${\rm r_2:}\; h\bar{f}c=cb$ as follows:
\begin{align*}
h\bar{f}c=cb &\Leftrightarrow h(c\bar{k})=cb \\
&\Leftrightarrow (cl)\bar{k}=cb \\
&\Leftrightarrow l\bar{k}=b \Leftrightarrow l=bk,
\end{align*}
and we rename the number of this relation by $10$.
Consequently, we obtain the presentation

\begin{tabular}{lllllll}
${G_6}=$ & $\langle$ & \multicolumn{4}{l}{$b,c,e,f,g,h,k,l \,\mid\;$} & \\
 & & ${\rm r_{10}:}\; l=bk$ & ${\rm r_3:}\; ch=he$ & ${\rm r_4:}\; fe=ec$ & ${\rm r_5:}\; ec=cg$ & \\
 & & ${\rm r_6:}\; {bg}=h$ & ${\rm r_8:}\;fc=ck$ & ${\rm r_9:}\;hc=cl$ & ${\rm r_{\infty}:}\;  \bar{h}^s gh^s f\bar{h}$ & $\rangle$.
\end{tabular}

\begin{figure}[h]
	\centerline{\includegraphics[keepaspectratio]{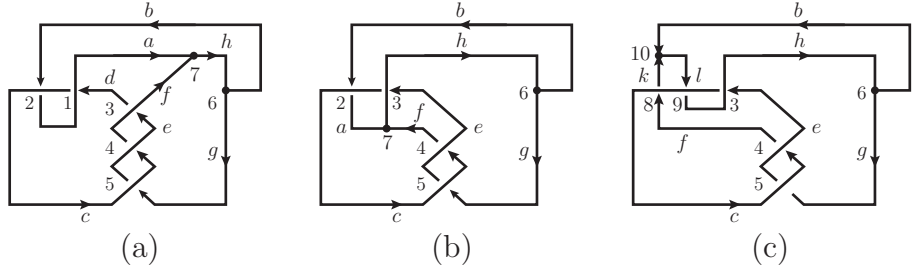}}
\caption{the graph of step 4, 5 and 6}
\label{fig_G_4_5_6}
\end{figure}

\noindent\fbox{STEP 7}
We carry out a collapsing move of the arc $b$ and we get the graph in Figure~\ref{fig_G_7_8} (a).
Notice that the two graphs in Figure~\ref{fig_G_7_8} (a) are equivalent.
By this operation, the generator $b$ is eliminated by using the relation ${\rm r_{10}:}\;l=bk$.
Then we obtain the presentation

\begin{tabular}{lllllll}
${G_7}=$ & $\langle$ & \multicolumn{4}{l}{$c,e,f,g,h,k,l \,\mid\;$} & \\
 & & ${\rm r_3:}\; ch=he$ & ${\rm r_4:}\; fe=ec$ & ${\rm r_5:}\; ec=cg$ & ${\rm r_6:}\; h\bar{g}=l\bar{k}$ & \\
 & & ${\rm r_8:}\;fc=ck$ & ${\rm r_9:}\;hc=cl$ & \multicolumn{2}{l}{${\rm r_{\infty}:}\;  \bar{h}^s gh^s f\bar{h}$} & $\rangle$.
\end{tabular}

\noindent\fbox{STEP 8}
We apply a turning move to the triplet of crossings $4$, $5$ and $8$,
and we get the graph in Figure~\ref{fig_G_7_8} (b).
This operation is divided into two steps.
In the beginning,
the generator $f$ is eliminated by using the relation ${\rm r_4:}\;fe=ec$ $\Leftrightarrow$ $f=ec\bar{e}$,
and then we obtain the presentation

\begin{tabular}{lllllll}
${G_8}'=$ & $\langle$ & $c,e,g,h,k,l \,\mid\;$ & ${\rm r_3:}\; ch=he$ & ${\rm r_5:}\; ec=cg$ & ${\rm r_6:}\; h\bar{g}=l\bar{k}$ & \\
& & & ${\rm r_8:}\;(ec\bar{e})c=ck$ & ${\rm r_9:}\;hc=cl$ & ${\rm r_{\infty}:}\;  \bar{h}^s gh^s (ec\bar{e})\bar{h}$ & $\rangle$.
\end{tabular}

Next by using the relation ${\rm r_5:}\; ec=cg$
we replace the generator $e$ which appeared in the relation ${\rm r_8}$ by $e=cg\bar{c}$ and $\bar{e}=c\bar{g}\bar{c}$,
then we obtain the relation $gc=kg$ and we rename the number of it by $11$.
As a result, we obtain the presentation

\begin{tabular}{lllllll}
${G_8}=$ & $\langle$ & $c,e,g,h,k,l \,\mid\;$ & ${\rm r_3:}\; ch=he$ & ${\rm r_5:}\; ec=cg$ & ${\rm r_6:}\; h\bar{g}=l\bar{k}$ & \\
& & & ${\rm r_{11}:}\;gc=kg$ & ${\rm r_9:}\;hc=cl$ & ${\rm r_{\infty}:}\;  \bar{h}^s gh^s ec\bar{e}\bar{h}$ & $\rangle$.
\end{tabular}

\begin{figure}[h]
	\centerline{\includegraphics[keepaspectratio]{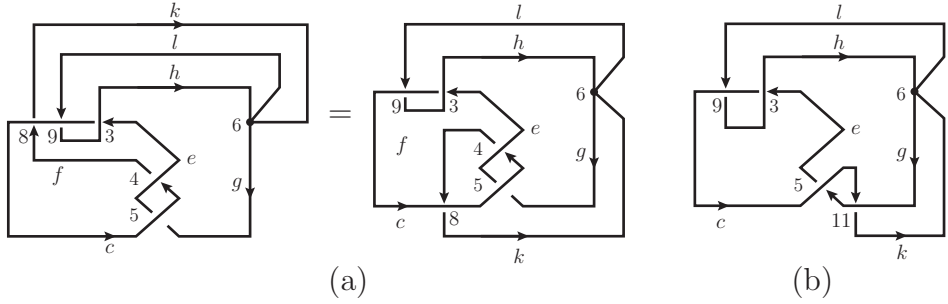}}
\caption{the graph of step 7 and 8}
\label{fig_G_7_8}
\end{figure}

\noindent\fbox{STEP 9}
We make a corkscrew move to the pair $6$ and $11$, then we obtain the graph in Figure~\ref{fig_G_9to12} (a).
In this operation the generator $k$ is eliminated by using the relation ${\rm r_{11}:}\;gc=kg$, then we obtain

\begin{tabular}{lllllll}
${G_9}=$ & $\langle$ & $c,e,g,h,l \,\mid\;$ & ${\rm r_3:}\; ch=he$ & ${\rm r_5:}\; ec=cg$ & ${\rm r_6:}\; hc=lg$ & \\
& & & ${\rm r_9:}\;hc=cl$ & \multicolumn{2}{l}{${\rm r_{\infty}:}\;  \bar{h}^s gh^s ec\bar{e}\bar{h}$} & $\rangle$.
\end{tabular}

\noindent\fbox{STEP 10}
By a corkscrew move of the pair $6$ and $5$, we obtain the graph in Figure~\ref{fig_G_9to12} (b),
the generator $g$ is eliminated by the relation ${\rm r_5:}\;ec=cg$,
then we obtain

\begin{tabular}{lllllll}
${G_{10}}=$ & $\langle$ & $c,e,h,l \,\mid\;$ & ${\rm r_3:}\; ch=he$ & ${\rm r_6:}\; h\bar{e}=l\bar{c}$ & ${\rm r_9:}\;hc=cl$ & \\
& & & \multicolumn{3}{l}{${\rm r_{\infty}:}\;  \bar{h}^s (\bar{c}ec) h^s ec\bar{e}\bar{h}$} & $\rangle$.
\end{tabular}

\noindent\fbox{STEP 11}
By a corkscrew move to the pair $6$ and $3$, we obtain the graph in Figure~\ref{fig_G_9to12} (c),
the generator $e$ is eliminated by the relation ${\rm r_3:}\;ch=he$,
then we obtain

\begin{tabular}{llllll}
${G_{11}}=$ & $\langle$ & $c,h,l \,\mid\;$ & ${\rm r_6:}\; hc=cl$ & ${\rm r_9:}\;hc=cl$ & \\
& & & \multicolumn{2}{l}{${\rm r_{\infty}:}\;  \bar{h}^s \bar{c}\bar{h}chc h^{s-1} chc\bar{h}\bar{c}$} & $\rangle$.
\end{tabular}

\noindent\fbox{STEP 12}
Finally, we make a corkscrew move to the pair $6$ and $9$, we obtain the last graph in Figure~\ref{fig_G_9to12} (d),
the generator $h$ is eliminated by the relation ${\rm r_6}={\rm r_9}$.
We obtain the following presentation:

\begin{tabular}{lllll}
${G_{12}}=$ & $\langle$ & $c,l \,\mid\;$ 
& ${\rm r_{\infty}:}\; c\bar{l}^s \bar{c}\bar{l} clc l^{s-1} clc \bar{l} \bar{c}^2$
& $\rangle$.
\end{tabular}

In this presentation, we simplify ${\rm r_{\infty}}$ by modifying cyclically,
then we obtain the following final version of the presentation and it is denoted by $G_{K_s}$.

\begin{tabular}{lllll}
${G_{K_s}}=$ & $\langle$ & $c,l \,\mid\;$ 
& $clc \bar{l}\bar{c} \bar{l}^s \bar{c}\bar{l} clc l^{s-1}$
& $\rangle$.
\end{tabular}

Hilden, Tejada and Toro proved that
for a tunnel number one knot $K$,
the fundamental group $\pi_1(S^3\setminus K)$ has a two generator, one relator presentation
in which the relator is a palindrome~\cite[Theorem 5.3]{HTT}.
A word is called a palindrome if its spelling can be read backwards as same as forwards.
We can make sure that the presentation $G_{K_s}$ has two generators and one relator,
especially the relator is a palindrome.

\begin{figure}[h]
	\centerline{\includegraphics[keepaspectratio]{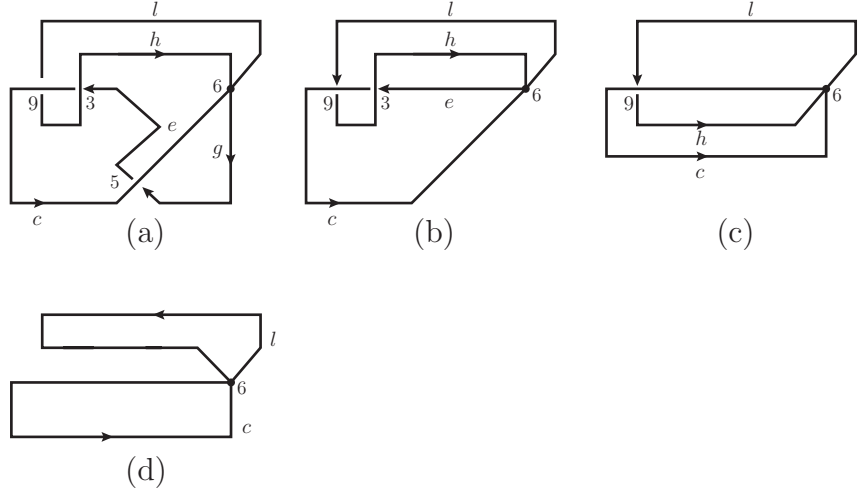}}
\caption{the graph of step from 9 to 12}
\label{fig_G_9to12}
\end{figure}

\subsection{The calculation of the presentation of a meridian-longitude pair}

Let $E_{K_s}(p/q)$ be a closed manifold obtained by Dehn surgery along $K_s$ with a slope $p/q$.
In order to obtain a presentation of $G_{K_s}(p,q)=\pi_1(E_{K_s}(p/q))$,
we have to get a presentation of a meridian-longitude pair.
In this subsection we will calculate it which is compatible with the final presentation $G_{K_s}$
obtained in the previous subsection.
The way of the calculation is as follows.
We first fix a meridian $c$ and get a presentation of a longitude $L_1$ which are compatible with the first presentation $G_1$ by using the method which appeared in the book of Burde and Zieschang~\cite{BZ}.
Then we continue to modify $L_i$ from $i=1$ to the last presentation $L_{12}$ compatible with $G_{12}$
along the steps mentioned in the previous section.

So we are going to begin the calculation.
We fix the meridian which is presented by the generator $c$.
The initial presentation $L_1$ of the longitude is obtained by the following procedure.
We read the label of arcs starting on the arc $c$ forward to the opposite direction of the orientation of $K_s$.
The sign of each label is determined so that
when we pass through the arc $x_k$ from the bottom,
if the orientation of the over arc $x_k$ coincides with its of the under arcs when we rotate $x_k$ counterclockwise,
we assign the positive sign,
otherwise we assign the negative sign.
When we have just come back to the initial arc $c$, we add the power of generator $c^\alpha$ so that
$\alpha$ is inverse of the sum of all signs we have read.
Then we obtain
$$L_1=
afc f_{2s-1} f_{2s-3} \cdots f_1 cg f_{2s-2} \cdots f_2 ae (\bar{c})^{2s+6} .
$$

Next we will modify $L_1$ along the steps of the modification of $G_i$.
In the STEP 1, 2 and the first step of STEP 3, the $L_1$ is not affected and we set $L_1=L_{3,1}$.
In the second step of STEP 3,
by using the relation ${\rm r_8:}\;f_3f_2=f_2f_1$
we eliminate the generator $f_1$ appeared in $L_{3,1}$ and we obtain
$$L_{3,2}=
afc f_{2s-1} f_{2s-3} \cdots f_3(\bar{f}_2 f_3 f_2) cg f_{2s-2} \cdots f_2 ae (\bar{c})^{2s+6} .
$$

By repeating this operation, we obtain the sequence of $L_{3,i}$, $i=3, 4, 5, 6, \cdots$,
at each step using the relator ${\rm r_{i+6}:}\;f_{i+1} f_i=f_i f_{i-1}$, as follows:
\begin{align*}
L_{3,3}&=afc f_{2s-1} \cdots f_5 \bar{f}_4 f_3 f_4 f_3 cg f_{2s-2} \cdots f_4 \bar{f}_3 f_4 f_3 ae (\bar{c})^{2s+6} \\
L_{3,4}&=afc f_{2s-1} \cdots f_5 (\bar{f}_4)^2 f_5 f_4 f_5 f_4 cg f_{2s-2} \cdots f_6 \bar{f}_5 f_4 f_5 f_4 ae (\bar{c})^{2s+6} \\
L_{3,5}&=afc f_{2s-1} \cdots f_7 (\bar{f}_6)^2 f_5 f_6 f_5 f_6 f_5 cg f_{2s-2} \cdots f_6 (\bar{f}_5)^2 f_6 f_5 f_6 f_5 ae (\bar{c})^{2s+6} \\
L_{3,6}&=afc f_{2s-1} \cdots f_7 (\bar{f}_6)^3 f_7 f_6 f_7 f_6 f_7 f_6 cg f_{2s-2} \cdots f_8 (\bar{f}_7)^2 f_6 f_7 f_6 f_7 f_6 ae (\bar{c})^{2s+6} \\
& \cdots.
\end{align*}

In order to simplify the description of $L_{3,i}$ we take the temporary notation that
$L_{3,i}=afc(\mathfrak{L}_i) cg (\mathfrak{R}_i) ae (\bar{c})^{2s+6}$. 
By the above observation we can assume that $\mathfrak{L}_i$ and $\mathfrak{R}_i$ have the formulae as follows.
We divided the case if $i=2j-1$ is odd or $i=2j$ is even ($j=1,2,3,\cdots, s$).
$\mathfrak{L}_i$ has the following formula
\begin{align*}
\mathfrak{L}_{2j-1}&=
\left(\prod_{n=s}^{j+1} f_{2n-1}\right) (\bar{f}_{2j})^{j-1} f_{2j-1} \left( f_{2j} f_{2j-1} \right)^{j-1} \\
\mathfrak{L}_{2j}&=
\left(\prod_{n=s}^{j+1} f_{2n-1}\right) (\bar{f}_{2j})^{j} \left( f_{2j+1} f_{2j} \right)^j ,
\end{align*}
where if $j=1$ we assume $(\bar{f}_{2j})^{j-1}$ and $\left( f_{2j} f_{2j-1} \right)^{j-1}$ are equal to identity.
$\mathfrak{R}_i$ has the following formula
\begin{align*}
\mathfrak{R}_{2j-1}&=
\left(\prod_{n=s-1}^j f_{2n} \right) (\bar{f}_{2j-1})^{j-1} \left( f_{2j} f_{2j-1} \right)^{j-1} \\
\mathfrak{R}_{2j}&=
\left(\prod_{n=s-1}^{j+1} f_{2n}\right) (\bar{f}_{2j+1})^{j+1} f_{2j} \left( f_{2j+1} f_{2j} \right)^{j-1} ,
\end{align*}
where if $j=1$ we assume
$(\bar{f}_{2j-1})^{j-1}$ and $\left( f_{2j} f_{2j-1} \right)^{j-1}$ are equal to identity,
and also $(\bar{f}_{2j+1})^{j-1}$ and $\left( f_{2j+1} f_{2j} \right)^{j-1}$ are equal to identity.
Notice that in these formulae we temporarily use the symbol $\prod_{n=a}^{b} x_n$ in the unusual meaning so that
the index of $x_n$ is decreasing in the product
as $\prod_{n=a}^{b} x_n=x_a x_{a-1} x_{a-2} \cdots x_{b+2} x_{b+1} x_b$.

We can verify it by induction as follows.
The above formulae are correct when $(i,j)=(1,1)$, $(2,1)$, $(3,2)$ and $(4,2)$.
If we assume that the formulae are correct when $i=2k-1$,
we obtain
\begin{align*}
\mathfrak{L}_i=\mathfrak{L}_{2k-1}&=
\left(\prod_{n=s-1}^{k+1} f_{2n-1}\right) (\bar{f}_{2k})^{k-1} f_{2k-1} \left( f_{2k} f_{2k-1} \right)^{k-1} & {\rm (*)} \\
\mathfrak{R}_i=\mathfrak{R}_{2k-1}&=
\left(\prod_{n=s-1}^{k} f_{2n}\right) (\bar{f}_{2k-1})^{k-1} \left( f_{2k} f_{2k-1} \right)^{k-1}. & {\rm (**)}
\end{align*}
Then
by using the relator ${\rm r_{i+7}:} f_{i+2} f_{i+1} =f_{i+1} f_i$ $\Leftrightarrow$ $f_{2k-1}=\bar{f}_{2k} f_{2k+1} f_{2k}$
in the presentation $G_{3,i}$,
we replace the generator $f_i=f_{2k-1}$ which appeared in $\mathfrak{L}_i$ and $\mathfrak{R}_i$ by $\bar{f}_{2k} f_{2k+1} f_{2k}$,
and then we obtain
{\allowdisplaybreaks %
\begin{align*}
{\rm (*):}& \left(\prod_{n=s}^{k+1} f_{2n-1}\right) (\bar{f}_{2k})^{k-1} \left( \bar{f}_{2k} f_{2k+1} f_{2k} \right) \left( f_{2k} \left( \bar{f}_{2k} f_{2k+1} f_{2k} \right)\right)^{k-1} \\
&=\left(\prod_{n=s}^{k+1} f_{2n-1} \right) (\bar{f}_{2k})^k \left( f_{2k+1} f_{2k} \right)^k \\
&=\mathfrak{L}_{2k}=\mathfrak{L}_{i+1}, \\
{\rm (**):}& \left(\prod_{n=s-1}^{k} f_{2n}\right) \left( \bar{f}_{2k} \bar{f}_{2k+1} f_{2k} \right)^{k-1} \left( f_{2k} \left( \bar{f}_{2k} f_{2k+1} f_{2k} \right)\right)^{k+1} \\
&=\left(\prod_{n=s-1}^{k+1} f_{2n}\right) f_{2k} \left( \bar{f}_{2k} \left( \bar{f}_{2k+1} \right)^{k-1} f_{2k} \right) \left( f_{2k+1} f_{2k} \right)^{k-1} \\
&=\left(\prod_{n=s-1}^{k+1} f_{2n} \right) \left( \bar{f}_{2k+1} \right)^{k-1} f_{2k} \left( f_{2k+1} f_{2k} \right)^{k-1} \\
&=\mathfrak{R}_{2k}=\mathfrak{R}_{i+1}.
\end{align*}
}

And also if these formulae are correct when $i=2k$, we obtain
\begin{align*}
\mathfrak{L}_i=\mathfrak{L}_{2k}&=
\left(\prod_{n=s}^{k+1} f_{2n-1} \right) (\bar{f}_{2k})^k \left( f_{2k+1} f_{2k} \right)^k & {\rm (*)} \\
\mathfrak{R}_i=\mathfrak{R}_{2k}&=
\left( \prod_{n=s-1}^{k+1} f_{2n} \right) (\bar{f}_{2k+1})^{k-1} f_{2k} \left( f_{2k+1} f_{2k} \right)^{k-1}. & {\rm (**)}
\end{align*}
Then we eliminate the generator $f_i=f_{2k}$ by the relator $f_{2k+2}f_{2k+1}=f_{2k+1}f_{2k}$ in $G_{3,i}$ and obtain
{\allowdisplaybreaks %
\begin{align*}
{\rm (*):}& \left(\prod_{n=s}^{k+1} f_{2n-1} \right) \left(\bar{f}_{2k+1} \bar{f}_{2k+2} f_{2k+1} \right)^k \left( f_{2k+1} \left( \bar{f}_{2k+1} f_{2k+2} f_{2k+1} \right) \right)^k \\
&=\left( \prod_{n=s}^{k+2} f_{2n-1} \right) f_{2k+1} \left( \bar{f}_{2k+1} (\bar{f}_{2k+2})^k f_{2k+1} \right) \left( f_{2k+2} f_{2k+1} \right)^k \\
&=\left( \prod_{n=s}^{k+2} f_{2n-1} \right) (\bar{f}_{2k+2})^k f_{2k+1} \left( f_{2k+2} f_{2k+1} \right)^k \\
&=\mathfrak{L}_{2(k+1)-1}=\mathfrak{L}_{2k+1}=\mathfrak{L}_{i+1}, \\
{\rm (**):}& \left( \prod_{n=s-1}^{k+1} f_{2n} \right) (\bar{f}_{2k+1})^{k-1} \left( \bar{f}_{2k+1} f_{2k+2} f_{2k+1} \right) \left( f_{2k+1} \left( \bar{f}_{2k+1} f_{2k+2} f_{2k+1} \right) \right)^{k-1} \\
&=\left( \prod_{n=s-1}^{k+1} f_{2n} \right) (\bar{f}_{2k+1})^k f_{2k+2} f_{2k+1} \left( f_{2k+2} f_{2k+1} \right)^{k-1} \\
&=\left( \prod_{n=s-1}^{k+1} f_{2n} \right) (\bar{f}_{2k+1})^k \left( f_{2k+2} f_{2k+1} \right)^{k} \\
&=\mathfrak{R}_{2(k+1)-1}=\mathfrak{R}_{2k+1}=\mathfrak{R}_{i+1}.
\end{align*}
}

Next we observe the last three steps of presentations $G_{3,2s-2}$, $G_{3,2s-1}$ and $G_{3,2s}$.
In the presentation $G_{3,2s-3}$ we can see
\begin{align*}
L_{3,2s-3}&=afc\mathfrak{L}_{2(s-1)-1} cg \mathfrak{R}_{2(s-1)-1} ae (\bar{c})^{2s+6} \\
&=afc f_{2s-1}(\bar{f}_{2s-2})^{s-2} f_{2s-3} \left( f_{2s-2} f_{2s-3} \right)^{s-2} \\
&\phantom{=afc} cg f_{2s-2}(\bar{f}_{2s-3})^{s-2} \left( f_{2s-2} f_{2s-3} \right)^{s-2} ae (\bar{c})^{2s+6}.
\end{align*}

We erase the generator $f_{2s-3}$ by the relation $f_{2s-1}f_{2s-2}=f_{2s-2}f_{2s-3}$, and then
\begin{align*}
L_{3,2s-2}
&=afc f_{2s-1} (\bar{f}_{2s-2})^{s-2} \left(\bar{f}_{2s-2} f_{2s-1} f_{2s-2} \right) \left( f_{2s-2} \left( \bar{f}_{2s-2} f_{2s-1} f_{2s-2} \right)\right)^{s-2} \\
&\phantom{=afc} cg f_{2s-2} \left( \bar{f}_{2s-2} \bar{f}_{2s-1} f_{2s-2} \right)^{s-2} \left( f_{2s-2} \left( \bar{f}_{2s-2} \bar{f}_{2s-1} f_{2s-2} \right)\right)^{s-2} ae (\bar{c})^{2s+6} \\
&=afc f_{2s-1} (\bar{f}_{2s-2})^{s-1} \left( f_{2s-1} f_{2s-2} \right)^{s-1} \\
&\phantom{=afc} cg (\bar{f}_{2s-1})^{s-2} f_{2s-2} \left( f_{2s-1} f_{2s-2} \right)^{s-2} ae (\bar{c})^{2s+6}.
\end{align*}

Next we erase the generator $f_{2s-2}$ by the relation $gf_{2s-1}=f_{2s-1}f_{2s-2}$ similarly, then
\begin{align*}
L_{3,2s-1}
&= afc (\bar{g})^{s-1} f_{2s-1} \left( gf_{2s-1} \right)^{s-1}
cg (\bar{f}_{2s-1})^{s-1} \left( gf_{2s-1} \right)^{s-1} ae (\bar{c})^{2s+6}.
\end{align*}

Finally, we erase the $f_{2s-1}$ by $bg=gf_{2s-1}$, and we obtain
$$L_{3,2s}=afc \bar{g}^s (bg)^s c \bar{b}^{s-1} g (bg)^{s-1} ae \bar{c}^{2s+6}.$$

We will continue to modification.
In the STEP 4, we replace $\bar{g}\bar{b}$ by $\bar{h}$ and $af$ by $h$, then we obtain
$$L_4=hc \bar{g}^s h^s c \bar{b}^{s-1} g h^{s-1} ae \bar{c}^{2s+6}.$$

In the STEP 5, the modification of presentation does not effect $L_4$, then $L_4=L_5$.

In the STEP 6, we erase the generator $a$ by $a=h\bar{f}$, then
$$L_6=hc \bar{g}^s h^s c \bar{b}^{s-1} g h^s \bar{f}e \bar{c}^{2s+6}.$$
In the rest steps, referring the relators which are used to eliminate generators, we obtain the sequence of $L_i$'s as follows:
\begin{align*}
L_7&= hc \bar{g}^s h^s c (k\bar{l})^{s-1} g h^s \bar{f}e \bar{c}^{2s+6} \\
L_8&= hc \bar{g}^s h^s c (k\bar{l})^{s-1} g h^s e \bar{c}^{2s+7} \\
L_9&= hc \bar{g}^s h^s c \left( gc\bar{g}\bar{l} \right)^{s-1} g h^s e \bar{c}^{2s+7} \\
L_{10}&= h \bar{e}^s c h^s c \left( \bar{c}ec\bar{e}c\bar{l} \right)^{s-1} \bar{c} ec h^s e \bar{c}^{2s+7} \\
L_{11}&= \bar{c}^s hc h^s c \left( \bar{c}\bar{h}chc\bar{h}\bar{c}hc\bar{l} \right)^{s-1} \bar{c}\bar{h} chc h^{s-1} ch \bar{c}^{2s+7} \\
L_{12}&= \bar{c}^{s-1} lc l^s \left( \bar{l}clc\bar{l}\bar{c} \right)^{s-1} \bar{l}clc l^{s-1} cl \bar{c}^{2s+8}.
\end{align*}

Although $L_{12}$ is the presentation of a longitude in $G_{K_s}$ at which we are aiming,
it is complicated to use for our proof of main theorem.
By using the relator ${\rm r_{\infty}:}\; clc \bar{l}\bar{c} \bar{l}^s \bar{c}\bar{l} clc l^{s-1}$
in $G_{K_s}$, we will simplify $L_{12}$ as follows.

We first rotate the word ${\rm r_{\infty}}$ and then obtain
$c \bar{l}\bar{c} \bar{l}^s \bar{c}\bar{l} clc l^{s-1}cl$.
It is equivalent to 
$$\bar{l} clc l^{s-1}cl=c l^s cl \bar{c}$$
and we replace the part of $L_{12}$
which equivalent to the left hand side of it by the right hand side.
Then we obtain
$$L'=\bar{c}^{s-1} lc l^s \left( \bar{l}clc\bar{l}\bar{c} \right)^{s-1} c l^s cl (\bar{c})^{2s+9}.$$
Next we also obtain
$clc\bar{l}\bar{c}=\bar{l}^{s-1}\bar{c}\bar{l}\bar{c}lcl^s$
by rotating the word ${\rm r_{\infty}}$ and splitting it.
We add the element $\bar{l}$ from the left to both side of the formula, then
$\bar{l}clc\bar{l}\bar{c}=\bar{l}^s \bar{c} \bar{l} \bar{c} lcl^s$.
By using it we can obtain
\begin{align*}
\left( \bar{l}clc\bar{l}\bar{c} \right)^{s-1}
&= \left( \bar{l}^s\bar{c}\bar{l}\bar{c}lcl^s \right)^{s-1} \\
&= \bar{l}^s \bar{c} \bar{l} \bar{c}^{s-1} lc l^s
\end{align*}
Applying this to $L'$, we obtain
\begin{align*}
L'&=
\bar{c}^{s-1} lc l^s \left( \bar{l}^s \bar{c} \bar{l} \bar{c}^{s-1} lc l^s \right) c l^s cl \bar{c}^{2s+9} \\
&= \bar{c}^{2s-2} lc l^s c l^s cl \bar{c}^{2s+9}. 
\end{align*}

In summary, we obtain the following Proposition.

\begin{prop}\label{presentation_of_Ks}
Let $K_s$ be a $(-2,3,2s+1)$-type Pretzel knot {\rm ($s\geqq 3$)}.
Then the knot group of $K_s$ has a presentation
$$G_{K_s}=\langle c,l \mid clc \bar{l}\bar{c} \bar{l}^s \bar{c}\bar{l} clc l^{s-1} \rangle,$$
and an element which represents the meridian $M$ is $c$ and
an element of the longitude $L$ is
$\bar{c}^{2s-2} lc l^s c l^s cl \bar{c}^{2s+9}$.
\end{prop}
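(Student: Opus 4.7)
The plan is to produce a two-generator, one-relator presentation of $\pi_1(S^3\setminus K_s)$ by exploiting the Morimoto--Sakuma--Yokota theorem that $K_s$ is tunnel number one, and then to transport an explicit longitude through the same sequence of Tietze moves. Following Hilden--Tejada--Toro, I would start from the Wirtinger presentation $G_1$ read off the diagram of $K_s$ in Figure~\ref{fig_G_1}, collapse the crossing labeled $6$ to obtain a spatial graph $\Gamma$ whose exterior in $S^3$ is homeomorphic to that of $K_s\cup\tau$ for an unknotting tunnel $\tau$, and then iteratively apply the local moves listed in Section~2.1 (corkscrew, open/collapsing, upper sliding, under sliding, turning), each of which is paired with a specific Tietze transformation. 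Reducing $\Gamma$ all the way to $S^1\vee S^1$ automatically produces a presentation on two generators and one relator.

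Concretely, STEP~1--2 set up $G_1$ and introduce the auxiliary generator $f_0$ so that the twisted region looks uniform. STEP~3 is the heart of the reduction: it applies the corkscrew move $2s$ times along the long twist. The key technical step here is to identify by induction the closed-form specific relator at stage $G_{3,i}$,
\[
R_{2j-1}=\Bigl(\prod_{n=1}^{j-1}\bar f_{2j-1}\bar f_{2j}\Bigr)\bar f_{2j-1}\Bigl(\prod_{n=1}^{j}f_{2j}f_{2j-1}\Bigr)\bar a,
\]
and its even analogue, and to check that the substitution $f_i=\bar f_{i+1}f_{i+2}f_{i+1}$ dictated by the triangle relator $r_{i+7}$ telescopes exactly into $R_{i+1}$. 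The last two sub-steps, which use the relators $gf_{2s-1}=f_{2s-1}f_{2s-2}$ and $bg=gf_{2s-1}$, then convert $R_{2s-2}$ into $(\bar g\bar b)^s g(bg)^s\bar a$. STEPS~4--12 are a deterministic cascade of one-move simplifications (introducing $h=af$, eliminating $d$, $a$, $b$, $f$, $k$, $g$, $e$, $h$ in that order), each removing one generator via one relator, and producing $G_{12}$; a single cyclic rotation yields the palindromic final form $G_{K_s}$.

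The longitude computation runs strictly in parallel. Using the Burde--Zieschang recipe, I would first read $L_1$ off the diagram as the signed product of overpasses together with a compensating power of $c$. Then I track $L_i$ under each Tietze transformation used to pass from $G_i$ to $G_{i+1}$. During the long STEP~3 iteration, writing $L_{3,i}=afc\,\mathfrak L_i\, cg\,\mathfrak R_i\,ae\bar c^{2s+6}$ and giving explicit parity-dependent formulas for $\mathfrak L_i$ and $\mathfrak R_i$ allows an induction of the same shape as for $R_i$: the same substitution $f_i=\bar f_{i+1}f_{i+2}f_{i+1}$ telescopes both factors into their next-stage expressions. STEPS~4--12 then apply the corresponding one-generator eliminations to obtain $L_{12}$. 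Finally, I would use two cyclic rewrites of the relator $r_\infty$ of $G_{K_s}$ — one showing $\bar l clc l^{s-1}cl=cl^s cl\bar c$, the other giving $(\bar l clc\bar l\bar c)^{s-1}=\bar l^s\bar c\bar l\bar c^{s-1}lcl^s$ — to collapse the middle block of $L_{12}$ down to $\bar c^{2s-2}lcl^s cl^s cl\bar c^{2s+9}$.

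The main obstacle is the combinatorial bookkeeping for the telescoping products in STEP~3: one must verify that the guessed closed forms for $R_i$, $\mathfrak L_i$, $\mathfrak R_i$ are genuinely preserved by the substitutions, paying attention to the boundary cases at $j=1$ (where several products collapse to the identity) and to the transition from the $f$-crossings to the final $g,b$-crossings in the last two sub-steps. Once these inductive formulas are in hand, every remaining step is a mechanical application of the move-to-Tietze dictionary recorded in Section~2.1, and the palindromic property of the final relator — predicted by \cite[Theorem 5.3]{HTT} — serves as a consistency check on the entire computation.
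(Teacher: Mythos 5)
Your proposal follows essentially the same route as the paper: the Wirtinger presentation collapsed at crossing $6$, the iterated corkscrew moves with the inductive closed forms for the specific relator $R_i$, the deterministic eliminations in STEPS 4--12, and the parallel tracking of the longitude via $\mathfrak{L}_i$, $\mathfrak{R}_i$ followed by the two cyclic rewrites of $r_\infty$ to reach $\bar c^{2s-2}lcl^s cl^s cl\bar c^{2s+9}$. This matches the paper's argument step for step, so it is correct as an outline, with the remaining work being exactly the combinatorial verifications you identify.
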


In fact,
we can easily verify that $L$ corresponds to a preferred longitude,
which is a preimage of null-homological curve in $H_1(S^3 \setminus K_s)$
for the natural map $\varphi:\pi_1 \rightarrow \pi_1 / [\pi_1,\pi_1] \cong H_1(S^3\setminus K_s)$,
where $\pi_1=\pi_1(S^3 \setminus K_s)$.

\section{proof of main theorem}

Now we are ready to prove Theorem~\ref{maintheorem}.
We shall prove main theorem as an analogy of the proof of Jun~\cite{Jun} and Roberts, Shareshian, Stein~\cite{RSS}.
In this section
$G_{K_s}(p,q)$ denotes the fundamental group $\pi_1(E_{K_s}(p/q))$ of the closed manifold $E_{K_s}(p/q)$
obtained by Dehn surgery along $K_s$ with a slope $p/q$.
By Proposition~\ref{presentation_of_Ks},
we obtain a presentation of $G_{K_s}(p,q)$ as follows:
$$G_{K_s}(p,q)=\langle c,l \mid clc \bar{l}\bar{c} \bar{l}^s \bar{c}\bar{l} clc l^{s-1}, M^pL^q \rangle.$$

\begin{lemma}\label{lem1}
{\rm (}{cf. \cite[Lemma 8]{Jun}, \cite[Lemma 3.4]{RSS}}{\rm )}
There exists an element $k$ of $G_{K_s}(p,q)$ such that $M=k^q$ and $L=k^{-p}$.
\end{lemma}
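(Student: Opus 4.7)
The plan is to exploit the commutativity of the meridian $M$ and longitude $L$ in the peripheral subgroup together with the surgery relation $M^p L^q = 1$ and the hypothesis $\gcd(p,q)=1$.

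First I would recall that in $\pi_1(S^3 \setminus K_s)$ the elements $M$ and $L$ lie on the boundary torus, so they generate an abelian peripheral subgroup isomorphic to $\mathbb{Z}^2$; in particular $[M,L]=1$, and this commutativity is inherited by their images in the quotient $G_{K_s}(p,q)$. Consequently the subgroup $\langle M,L\rangle \subseteq G_{K_s}(p,q)$ is an abelian quotient of $\mathbb{Z}^2$ by a subgroup containing $(p,q)$, hence cyclic once we know $(p,q)$ is primitive in $\mathbb{Z}^2$.

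Next, since $p$ is odd and $p/q$ is taken in lowest terms (so $\gcd(p,q)=1$), Bezout's identity yields integers $a,b \in \mathbb{Z}$ with $aq - bp = 1$. I would then define
\[
k := M^{a} L^{b} \in G_{K_s}(p,q),
\]
which is a natural candidate for the generator of the cyclic image of $\langle M,L\rangle$.

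Finally, I would verify the two desired identities by a direct calculation using only commutativity and the consequences $L^q = M^{-p}$ and $M^p = L^{-q}$ of the surgery relation:
\begin{align*}
k^{q} &= M^{aq} L^{bq} = M^{aq} (L^{q})^{b} = M^{aq} M^{-bp} = M^{aq - bp} = M, \\
k^{-p} &= M^{-ap} L^{-bp} = (M^{p})^{-a} L^{-bp} = L^{aq} L^{-bp} = L^{aq - bp} = L.
\end{align*}
There is essentially no obstacle here; the statement is the standard fact that primitive Dehn filling collapses the peripheral $\mathbb{Z}^2$ onto a cyclic group. The only care needed is to fix the Bezout coefficients with the correct sign convention $aq - bp = +1$ (rather than $-1$), so that the exponents come out to $+q$ and $-p$ instead of $-q$ and $+p$.
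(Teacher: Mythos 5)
Your proposal is correct and is essentially the paper's own argument: the paper also uses commutativity of $M$ and $L$, a Bezout relation $rp+sq=1$, and sets $k=M^sL^{-r}$, which coincides with your $k=M^aL^b$ up to the sign convention on the coefficients. Your explicit verification of $k^q=M$ and $k^{-p}=L$ using $L^q=M^{-p}$ and $M^p=L^{-q}$ is exactly the computation the paper leaves implicit.
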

\begin{proof}
By the property of a meridian and a longitude on a torus, the pair $M$ and $L$ satisfies $ML=LM$.
Since $p$ and $q$ are relatively prime,
there is a pair of integers $r$ and $s$ such that $rp+sq=1$.
Then $k=M^sL^{-r}$ satisfies the required condition.
\end{proof}

By Lemma~\ref{lem1}, we obtain the following fact:

\begin{fact}\label{fact1}
The element $k$ which appeared in Lemma~\ref{lem1} satisfies
$$k^{p-(4s+7)q} = \bar{l}\bar{c}\bar{l}^s \bar{c}\bar{l}^s \bar{c}\bar{l}.$$
\end{fact}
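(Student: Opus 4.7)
The plan is to rewrite $k^{p-(4s+7)q}$ as a product of powers of $M$ and $L$, substitute the explicit words from Proposition~\ref{presentation_of_Ks}, and collapse the expression using the peripheral commutativity $ML=LM$ in the knot group. Since all powers of $k$ commute, Lemma~\ref{lem1} gives
\[
k^{p-(4s+7)q}=k^{p}\cdot(k^{q})^{-(4s+7)}=L^{-1}M^{-(4s+7)}.
\]
Writing $v=lcl^{s}cl^{s}cl$, so that $L=\bar c^{2s-2}\,v\,\bar c^{2s+9}$ and $M=c$, I would then invert $L$ and collect the exponents of $c$ on the right to obtain
\[
L^{-1}M^{-(4s+7)}=c^{2s+9}\,v^{-1}\,c^{2s-2}\,\bar c^{4s+7}=c^{2s+9}\,v^{-1}\,\bar c^{2s+9}.
\]

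Next I would note that $v^{-1}=\bar l\bar c\bar l^{s}\bar c\bar l^{s}\bar c\bar l$ is exactly the right-hand side of Fact~\ref{fact1}. Thus the identity reduces to showing that $c^{2s+9}$ and $v$ commute in $G_{K_s}(p,q)$. For this I would invoke the basic peripheral relation: the meridian $M=c$ and the longitude $L$ sit on the boundary torus of the knot exterior, hence commute in $G_{K_s}$ (and therefore in $G_{K_s}(p,q)$). Writing out $cL=Lc$ with the explicit word for $L$ and cancelling the outer powers $\bar c^{2s-2}$ on the left and $\bar c^{2s+9}$ on the right of both sides produces $cv=vc$. Taking inverses gives $c\,v^{-1}=v^{-1}c$, so $c^{2s+9}\,v^{-1}\,\bar c^{2s+9}=v^{-1}$, which completes the verification.

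The only step that is not purely mechanical is the invocation of $ML=LM$. If a presentation-level argument is preferred, one could instead check $cv=vc$ directly from the palindromic relator $clc\bar l\bar c\bar l^{s}\bar c\bar l clc l^{s-1}$; this is essentially what the peripheral commutativity encodes, and I expect no real obstacle. At bottom, the identity rests on two features of Proposition~\ref{presentation_of_Ks}: that $L$ has the form $\bar c^{a}v\bar c^{b}$ with $a-b=-(4s+7)$, and that $c$ commutes with this central block $v$.
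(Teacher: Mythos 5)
Your proposal is correct and takes essentially the same route as the paper: invert the explicit word $L=\bar c^{2s-2}\,lcl^{s}cl^{s}cl\,\bar c^{2s+9}$ and use Lemma~\ref{lem1} to turn the peripheral elements into powers of $k$, leaving exactly $\bar l\bar c\bar l^{s}\bar c\bar l^{s}\bar c\bar l$. The only difference is your last step: the paper simply substitutes $c=M=k^{q}$, so the outer powers $c^{2s+9}$ and $c^{2s-2}$ are powers of $k$ and absorb into $k^{p}$ directly, which makes your appeal to $ML=LM$ (to get $cv=vc$) valid but unnecessary.
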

\begin{proof}
By Lemma~\ref{lem1} and Proposition~\ref{presentation_of_Ks},
$k^{-p}=L=\bar{c}^{2s-2} lc l^s c l^s cl \bar{c}^{2s+9}$.
Then
\begin{align*}
k^p &= c^{2s+9} \bar{l}\bar{c}\bar{l}^s \bar{c} \bar{l}^s \bar{c}\bar{l} c^{2s-2} \\
&= M^{2s+9} \bar{l}\bar{c}\bar{l}^s \bar{c} \bar{l}^s \bar{c}\bar{l} M^{2s-2} \\
&= k^{(2s+9)q} \bar{l}\bar{c}\bar{l}^s \bar{c}\bar{l}^s \bar{c}\bar{l} k^{(2s-2)q} \\
\Longleftrightarrow\;\;& 
\bar{k}^{(2s+9)q} k^p \bar{k}^{(2s-2)q}=\bar{l}\bar{c}\bar{l}^s \bar{c}\bar{l}^s \bar{c}\bar{l} \\
\Longleftrightarrow\;\;&
k^{p-(4s+7)q}=\bar{l}\bar{c}\bar{l}^s \bar{c}\bar{l}^s \bar{c}\bar{l}.
\end{align*}
\end{proof}

We will consider actions of $G_{K_s}(p,q)$ on a leaf space $\mathcal{T}$.
Each action is regarded as a homomorphism $\Phi:G_{K_s}(p,q)\rightarrow {\rm Homeo}(\mathcal{T})$.
If there is no ambiguity, we write its image $\Phi(g)$ of an element $g\in G_{K_s}(p,q)$ by the same symbol $g$,
that is, for a point $x\in \mathcal{T}$, we write $\Phi(g)(x)=xg$.
Moreover, for any two elements $g_1$ and $g_2$,
we write $\Phi(g_2)(\Phi(g_1)(x))=xg_1 g_2$.

\begin{lemma}\label{lem2}
{\rm (}{cf. \cite[Lemma 15]{Jun}, \cite[Lemma 3.5]{RSS}}{\rm )}
Let $P$ be a partially ordered set.
We assume that $p-(4s+7)q\geqq 0$, $q>0$ and
$G_{K_s}(p,q)$ acts on $P$ preserving its order.
If there is an element $x$ of $P$ which satisfies one of the following conditions:
\begin{enumerate}
\item[(1)] $xk=x$ and $x$, $xl$ are related in $P$, or
\item[(2)] $xl=x$ and $x$, $xk$ are related in $P$,
\end{enumerate}
then $x$ is fixed by any element of $G_{K_s}(p,q)$.
\end{lemma}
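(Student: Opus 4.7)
The plan is to reduce both hypotheses to the conclusion that $x$ is fixed by the two generators $c$ and $l$ of the presentation of $G_{K_s}(p,q)$ displayed just after the statement of Lemma~\ref{lem1}; fixing those two generators forces $x$ to be fixed by the whole group. The two standing inputs are Fact~\ref{fact1}, namely
\[
k^{p-(4s+7)q} = \bar{l}\bar{c}\bar{l}^s\bar{c}\bar{l}^s\bar{c}\bar{l},
\]
and the identity $c = M = k^q$ in $G_{K_s}(p,q)$ from Lemma~\ref{lem1}, which gives $xc = x$ whenever $xk = x$.

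For case (1) I would assume $xk = x$, so $xc = x$ and, since $p - (4s+7)q \geq 0$,
\[
x = x\bar{l}\bar{c}\bar{l}^s\bar{c}\bar{l}^s\bar{c}\bar{l}.
\]
Multiplying on the right by $l$ rewrites this as $xl = x\bar{l}\bar{c}\bar{l}^s\bar{c}\bar{l}^s\bar{c}$. By hypothesis either $x\leq xl$ or $xl\leq x$; the two subcases are symmetric, so assume $x\leq xl$. Then $x\bar{l}\leq x$, and iterating order-preservation gives $x\bar{l}^n\leq x$ for every $n\geq 0$. Reading the word $\bar{l}\bar{c}\bar{l}^s\bar{c}\bar{l}^s\bar{c}$ left to right one letter at a time, the bound ``$\leq x$'' is preserved at each stage: every $\bar{c}$-letter uses order-preservation together with $x\bar{c}=x$, and each $\bar{l}$ or $\bar{l}^s$ block uses $x\bar{l}^n\leq x$. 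At the end this yields $xl\leq x$, and antisymmetry combined with $x\leq xl$ gives $xl=x$. So $x$ is fixed by both $c$ and $l$.

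For case (2) I would assume $xl = x$ and, by symmetry, $x\leq xk$. Because $xl=x$ kills every $\bar{l}$ in the right-hand side of Fact~\ref{fact1}, the identity collapses at $x$ to
\[
xk^{p-(4s+7)q} = x\bar{c}^3 = x\bar{k}^{3q},
\]
whence $xk^{p-(4s+4)q}=x$. The exponent is strictly positive, since $p/q\geq 4s+7>4s+4$. Iterating $x\leq xk$ with order-preservation produces the chain $x\leq xk\leq xk^2\leq\cdots\leq xk^{p-(4s+4)q}=x$, and antisymmetry forces $xk=x$. Therefore $xc=xk^q=x$, and together with $xl=x$ this again exhibits $x$ as a global fixed point of $G_{K_s}(p,q)$.

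The main (and essentially only) point requiring care is the bookkeeping of the chain of inequalities in case (1): one must verify at every letter of the word $\bar{l}\bar{c}\bar{l}^s\bar{c}\bar{l}^s\bar{c}$ that the partial image of $x$ remains $\leq x$, alternating between $x\bar{c}=x$ for each $\bar{c}$ and $x\bar{l}^n\leq x$ for each $\bar{l}$-block. Once that chain is established, both cases close in a single line by antisymmetry.
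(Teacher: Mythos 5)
Your case (1) is fine and is essentially the paper's own argument: from $xk=x$ you get $xc=x$ and $xk^{p-(4s+7)q}=x$, then you push the bound $\leq x$ letter by letter through $\bar{l}\bar{c}\bar{l}^s\bar{c}\bar{l}^s\bar{c}$ using $x\bar{c}=x$ and $x\bar{l}^n\leq x$; the paper does the same chain with strict inequalities and a contradiction instead of antisymmetry, which is only a cosmetic difference.

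Case (2), however, has a genuine gap. The step ``because $xl=x$ kills every $\bar{l}$ in the right-hand side of Fact~\ref{fact1}, the identity collapses at $x$ to $xk^{p-(4s+7)q}=x\bar{c}^3$'' is not valid. The hypothesis $xl=x$ says only that $l$ fixes the single point $x$; it says nothing about the intermediate images. Evaluating the word $\bar{l}\bar{c}\bar{l}^s\bar{c}\bar{l}^s\bar{c}\bar{l}$ at $x$ from left to right, you may cancel the first $\bar{l}$ (the running point is $x$), but the next $\bar{l}^s$ acts on $x\bar{c}$, the following one on $x\bar{c}\bar{l}^s\bar{c}$, and so on, and there is no reason these points are fixed by $l$ unless you already know $xc=x$ --- which is exactly what you are trying to prove. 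Consequently the claimed identity $xk^{p-(4s+4)q}=x$, and the monotone chain built on it, do not follow. The correct route (the one in the paper) keeps inequalities rather than equalities: assuming $x<xk$, hence $x<xc$, one evaluates $x\bar{k}^{p-(4s+7)q}=xlcl^scl^scl$ and pushes the estimate through the word, using $xl=x$ and $xl^s=x$ only when the running point is literally $x$ (e.g.\ $xlcl^scl^sc=xcl^scl^sc$) and using $xc>x$ together with order-preservation applied to the remaining suffix (e.g.\ $xcl^scl^sc>xl^scl^sc=xcl^sc$); this yields $x\bar{l}>xc$, i.e.\ $x>xc$, contradicting $xc>x$, with a separate (easy) treatment of the boundary case $p-(4s+7)q=0$. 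Your case (2) needs to be rewritten along these lines; as it stands the central equality is false in general.
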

Notice that two elements $x$ and $y$ in $P$ are related for the order $<$ on $P$
if they satisfy the one of properties $x<y$, $x>y$ or $x=y$.

\begin{proof}
We first assume the condition (1).
Since $x=xk=xk^q=xM=xc$ by Lemma~\ref{lem1} we obtain $xc=x$, that is, $c$ fixes the element $x$.
Under the assumption that $x$ and $xl$ are related,
if we assume $xl=x$ then $x$ is fixed by any element $g$ of $G_{K_s}(p,q)$
since $G_{K_s}(p,q)$ is generated by the two elements $c$ and $l$.
Therefore, we assume $x<xl$ by choosing an order on $P$.
It is equivalent to $x\bar{l}<x$ because all actions of $G_{K_s}(p,q)$ preserve an order of $P$.
Then we obtain $x\bar{l}^2<x$ by $x\bar{l}^2<x\bar{l}<x$.
Repeating this consideration, we obtain $x\bar{l}^s <x$ for $s>0$.
Since $xc=x$, it is equivalent to $x\bar{c}=x$.
Because of the condition $x\bar{l}<x$ we obtain $x\bar{l}\bar{c}<x$ by $x\bar{l}\bar{c}<x\bar{c}=x$.
Similarly we obtain $x\bar{l}\bar{c}\bar{l}^s\bar{c}\bar{l}^s\bar{c}<x$.
By Fact~\ref{fact1},
we obtain $k^{p-(4s+7)q}l=\bar{l}\bar{c}\bar{l}^s\bar{c}\bar{l}^s \bar{c}$.
Using this, we obtain
$$x > x\bar{l}\bar{c}\bar{l}^s\bar{c}\bar{l}^s \bar{c}=xk^{p-(4s+7)q}l=xl,$$
since $k$ fixes $x$.
It is contradict to $xl>x$,
then $x=xl$ and therefore $x$ is fixed by any element of $G_{K_s}(p,q)$.

Next we assume the condition (2).
If $xk=x$, we obtain $x=xk=xk^q=xM=xc$ and then
$x$ is fixed by any element of $G_{K_s}(p,q)$ similarly.
Then we assume $x<xk$ by choosing an order on $P$.
Since $q>0$ we obtain $xc>x$ by $x<xk<xk^q=xM=xc$.
The formula of Fact~\ref{fact1} is equivalent to
$\bar{k}^{p-(4s+7)q}=lcl^scl^scl$.
If $p-(4s+7)q>0$, 
we obtain $x>x\bar{k}>\cdots >x\bar{k}^{p-(4s+7)q}$
by the assumption $x<xk$.
By these formulae, we obtain
\begin{align*}
x&> xlcl^scl^scl \\
\Longleftrightarrow \;
x\bar{l}&> xlcl^s cl^s c
=xcl^s cl^s c \\
&> xl^s cl^s c
= xcl^s c \\
&> xl^s c
=xc.
\end{align*}
Since $x\bar{l}=x$ it means $x>xc$.
It contradicts $xc>x$.
Therefore $xc=x$ and then all elements of $G_{K_s}(p,q)$ fix $x$.
In the case that $p-(4s+7)q=0$,
we obtain $lcl^s cl^s cl=1$
by $1=k^{p-(4s+7)q}=\bar{l}\bar{c}\bar{l}^s \bar{c}\bar{l}^s \bar{c}\bar{l}$.
Using this formula, we also obtain
\begin{align*}
x&= xlcl^scl^scl \\
\Longleftrightarrow \;
x\bar{l}&=xlcl^s cl^s c = xcl^s cl^s c \\
&> xl^s cl^s c
= xcl^s c \\
&> xl^s c
=xc.
\end{align*}
It similarly contradicts $xc>x$.
\end{proof}

\begin{lemma}\label{lem3}
{\rm (}{cf. \cite[Lemma 16]{Jun}}{\rm )}
We assume $q>0$ and
all actions of $G_{K_s}(p,q)$ on $\mathbb{R}$ preserve an orientation of $\mathbb{R}$.
If $xk>x$ for any $x\in \mathbb{R}$,
then $xl>x$ for any $x\in \mathbb{R}$.
\end{lemma}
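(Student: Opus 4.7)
The strategy is proof by contradiction, closely mirroring the argument for Lemma~\ref{lem2}(2). Suppose the conclusion fails, so that there exists $x_0 \in \mathbb{R}$ with $x_0 l \leq x_0$. First, the hypothesis $xk > x$ combined with $M = k^q$ from Lemma~\ref{lem1} and $q > 0$ gives, by order-preservation and iteration, $xc = xk^q > x$ for every $x$. Next I rule out the possibility that $l$ has a fixed point: if $x_0 l = x_0$, then because any two points of $\mathbb{R}$ are comparable under the usual order, Lemma~\ref{lem2}(2) (applied with $P = \mathbb{R}$) implies that $x_0$ is fixed by every element of $G_{K_s}(p,q)$, contradicting $x_0 k > x_0$. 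Since $l$ then acts as an orientation-preserving homeomorphism of $\mathbb{R}$ without fixed points, the open sets $\{x : xl > x\}$ and $\{x : xl < x\}$ partition $\mathbb{R}$, and connectedness of $\mathbb{R}$ forces one of them to be empty. Hence either $xl > x$ for all $x$ (the desired conclusion) or $xl < x$ for all $x$.

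It remains to exclude the latter case; assume $xl < x$ for every $x$, whence also $x\bar{l}^n > x$ for every $n \geq 1$. By Fact~\ref{fact1}, $k^{p-(4s+7)q} = \bar{l}\bar{c}\bar{l}^s\bar{c}\bar{l}^s\bar{c}\bar{l}$; since the assumptions $p/q \geq 4s+7$ and $q > 0$ give $p - (4s+7)q \geq 0$, and since $xk > x$, we obtain
\[
x \leq x\bar{l}\bar{c}\bar{l}^s\bar{c}\bar{l}^s\bar{c}\bar{l},
\]
equivalently $xlcl^scl^scl \leq x$ for all $x$. I would then chain this inequality with the two strict monotonicity facts $xl < x$ and $xc > x$, mimicking the substitution argument of Lemma~\ref{lem2}(2): by replacing intermediate expressions one letter at a time and tracking the direction of each comparison, one should arrive at $x > xc$, contradicting $xc > x$. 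The boundary case $p = (4s+7)q$ is treated in parallel, using the equality $xlcl^scl^scl = x$ in place of the inequality.

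The principal obstacle is carrying out this chain cleanly. In Lemma~\ref{lem2}(2) the identity $xl = x$ annihilates blocks of the form $x l^s = x$ and dramatically shortens the word; here we have only strict inequalities available, so each substitution must preserve the direction of the comparison while shuffling generators. The palindromic structure of the word $\bar{l}\bar{c}\bar{l}^s\bar{c}\bar{l}^s\bar{c}\bar{l}$ appearing in Fact~\ref{fact1}, together with the fact that the interior $\bar{l}$-exponents are large (since $s \geq 3$), gives enough rigidity for the competing contributions of $\bar{l}$ (increasing under our hypothesis) and $\bar{c}$ (decreasing) to telescope in the required direction. Finding the precise sequence of substitutions that achieves this telescoping is the main technical challenge.
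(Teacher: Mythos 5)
There is a genuine gap at the decisive step, namely the exclusion of the case ``$xl<x$ for all $x$''. At that point the only inputs you allow yourself are the three order facts $xl<x$, $xc>x$ and $xlcl^scl^scl\leq x$ (the last coming from Fact~\ref{fact1} under the extra assumption $p-(4s+7)q\geq 0$), and you hope that letter-by-letter substitutions will telescope to $x>xc$. No such chain can exist: these three facts are simultaneously satisfiable by orientation-preserving maps, e.g.\ take $c$ to act as translation by $+1$ and $l$ as translation by $-10$; since the word $lcl^scl^scl$ contains $2s+2$ letters $l$ and only three letters $c$, it then moves every point downward, so $xlcl^scl^scl\leq x$ holds and is compatible with, rather than contradictory to, your hypotheses. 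Any purely order-theoretic manipulation is valid in this model, so it cannot produce $x>xc$. A correct argument must use the knot group relator itself, which your outline never invokes at this step (Fact~\ref{fact1} encodes only the surgery relation $M^pL^q$ together with the expressions for $M$ and $L$, not the one-relator structure). The step you defer as ``the main technical challenge'' is therefore not just unfinished but unworkable as planned. A secondary point: both your appeal to Lemma~\ref{lem2}(2) (to rule out fixed points of $l$) and the direction of the inequality you extract from Fact~\ref{fact1} require $p-(4s+7)q\geq 0$, which is not among the hypotheses of Lemma~\ref{lem3}; this is harmless for the application in Proposition~\ref{prop4}, but it means you would only be proving a weakened form of the stated lemma.

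For comparison, the paper's proof is direct and needs neither a case analysis nor Fact~\ref{fact1} nor any slope condition. From $xk>x$ and $q>0$ one gets $xc>x$ for all $x$. Cyclically rotating the relator $clc\bar{l}\bar{c}\bar{l}^s\bar{c}\bar{l}clcl^{s-1}=1$ gives the identity $clcl^{s-1}clc=lcl^scl$ in $G_{K_s}(p,q)$. Applying $xc>x$ twice (once at $x$ before the word $lcl^scl$, once at the point $x'=xclcl^scl$) yields $xclcl^{s-1}clc=xlcl^scl<xclcl^sclc$ for every $x$; cancelling the orientation-preserving homeomorphism $clc$ on the right gives $xclcl^{s-1}<xclcl^s$, and since $clc$ and $l^{s-1}$ are surjective on $\mathbb{R}$ this says exactly $yl>y$ for every $y$. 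If you want to salvage your contradiction framework, the relator manipulation above is the missing ingredient you would have to inject into the ``telescoping'' step.
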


\begin{proof}
Under the assumption, we obtain $xc>x$ for any $x\in\mathbb{R}$ by
$$x<xk<xk^q = xM=xc.$$
Using the relator in $G_{K_s}(p,q)$ we obtain
\begin{align*}
& clc\bar{l}\bar{c}\bar{l}^s \bar{c}\bar{l}clcl^{s-1}=1 \\
\Longleftrightarrow \;\;
& \bar{l}\bar{c}\bar{l}^s \bar{c}\bar{l}clcl^{s-1} clc =1 \\
\Longleftrightarrow \;\;
& clcl^{s-1} clc = lcl^s cl.
\end{align*}
Since $xc>x$ for any $x\in \mathbb{R}$ and
using this formula, we obtain
$$xclcl^{s-1}clc = x lcl^s cl < xclcl^s clc,$$
because if we take $x'=xclcl^scl \in \mathbb{R}$ then we obtain
$xclcl^s clc =x'c>x' =xclcl^scl$.
Since any element of $G_{K_s}(p,q)$ preserves an orientation of $\mathbb{R}$
we obtain
\begin{align*}
xclcl^s clc &> xclcl^{s-1} clc \\
\Longleftrightarrow
xclcl^s &> xclcl^{s-1}
\end{align*}
for any $x\in\mathbb{R}$.

Since the element $clc\in G_{K_s}(p,q)$ is thought as a homeomorphism of $\mathbb{R}$
there is a point $x\in\mathbb{R}$ which satisfies $x'=xclc$
for any point $x'\in \mathbb{R}$.
Therefore,
for any point $x'\in \mathbb{R}$ we obtain
$$x'l^s =xclcl^s > xclcl^{s-1} =x'l^{s-1}.$$
Then it follows that
$xl>x$ for any $x \in \mathbb{R}$.
\end{proof}

\begin{prop}\label{prop4}
{\rm (}{cf. \cite[Proposition 17]{Jun}}{\rm )}
If $q>0$ and $p/q\geqq 4s+7$,
for any homomorphism $\Phi:G_{K_s}(p,q)\rightarrow {\rm Homeo}^+(\mathbb{R})$
there is a point $x \in \mathbb{R}$ such that
$x$ is fixed by any element of $\Phi(G_{K_s}(p,q))$.
\end{prop}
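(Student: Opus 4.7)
The plan is to argue by contradiction: suppose that $\Phi$ admits no global fixed point on $\mathbb{R}$, and derive a contradiction by combining Lemma~\ref{lem2}, Lemma~\ref{lem3}, and Fact~\ref{fact1}. Since $\mathbb{R}$ is totally ordered, any two of its points are automatically related in the sense of Lemma~\ref{lem2}, so conditions (1) and (2) of that lemma reduce to the bare existence of a fixed point of $\Phi(k)$ or of $\Phi(l)$. In particular, if $\Phi(k)$ had some fixed point $x$, then Lemma~\ref{lem2}(1) would force $x$ to be a global fixed point, contradicting the hypothesis; hence $\Phi(k)$ must act fixed-point freely on $\mathbb{R}$, and being orientation preserving, it satisfies either $xk>x$ for every $x\in\mathbb{R}$ or $xk<x$ for every $x\in\mathbb{R}$.

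Assume first the sub-case $xk>x$ for all $x$. Lemma~\ref{lem3} immediately gives $xl>x$ for every $x$, and since $q>0$ the relation $c=M=k^q$ coming from Lemma~\ref{lem1} also gives $xc=xk^q>x$ for every $x$. Applying Fact~\ref{fact1} at an arbitrary point $x$ yields
\[
xk^{p-(4s+7)q} \;=\; x\bar{l}\bar{c}\bar{l}^s\bar{c}\bar{l}^s\bar{c}\bar{l}.
\]
The right-hand side is obtained from $x$ by composing finitely many applications of $\bar{l}$ and $\bar{c}$, each of which strictly decreases every point, so the right-hand side is strictly less than $x$. On the other hand, the hypothesis $p/q\geqq 4s+7$ makes the exponent $p-(4s+7)q$ non-negative, so the left-hand side is $\geqq x$; this is the desired contradiction, with the boundary case $p-(4s+7)q=0$ still producing $xk^0=x$ while the word side stays strictly below $x$.

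The remaining sub-case $xk<x$ for all $x$ is handled by the analogous argument with all inequalities reversed, and this is the main obstacle in carrying out the plan, since Lemma~\ref{lem3} is stated only in the form $xk>x$. To handle it I would rerun the proof of Lemma~\ref{lem3} word-for-word with $xc<x$ replacing $xc>x$; each strict inequality in that argument flips consistently, yielding $xl<x$ for every $x$. Then the chain on the word $\bar{l}\bar{c}\bar{l}^s\bar{c}\bar{l}^s\bar{c}\bar{l}$ places it strictly above $x$, while $xk^{p-(4s+7)q}\leqq x$ under $xk<x$, producing the analogous contradiction. Thus no fixed-point-free action $\Phi$ can exist, which is precisely the statement of the proposition.
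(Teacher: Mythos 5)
Your proof is correct and follows essentially the same route as the paper: argue by contradiction, use Lemma~\ref{lem2} (with all points of $\mathbb{R}$ related) to force $\Phi(k)$ to be fixed-point free, use Lemma~\ref{lem3} to make $l$ move points in the same direction as $k$, and then contradict $p/q\geqq 4s+7$ via Fact~\ref{fact1}, including the boundary case $p=(4s+7)q$. The only cosmetic differences are that the paper disposes of the sub-case $xk<x$ by re-choosing the orientation of $\mathbb{R}$ instead of rerunning Lemma~\ref{lem3} with reversed inequalities, and it phrases the final estimate through the inverted word $k^{(4s+7)q-p}=lcl^{s}cl^{s}cl$ together with the intermediate bounds $xlcl^{s}cl^{s}cl>xc^{3}>x$, but the substance is the same.
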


\begin{proof}
We assume that
there is no point $x\in \mathbb{R}$ which is fixed by any element of $\Phi(G_{K_s}(p,q))$,
and we shall see a contradiction.

By the above assumption, there is no point which satisfies the conclusion of Lemma~\ref{lem2}.
Then there is no point $x\in \mathbb{R}$ which satisfies the assumptions of Lemma~\ref{lem2}.
Now we think $P=\mathbb{R}$ in Lemma~\ref{lem2} and $\mathbb{R}$ has an ordinary order,
then any two points in $\mathbb{R}$ are related.
Therefore we can assume that $xk>x$ for any $x\in\mathbb{R}$ by determining an orientation on $\mathbb{R}$
because the element $k$ is thought as an orientation-preserving homeomorphism of $\mathbb{R}$.

By Lemma~\ref{lem3} under this assumption, $xl>x$ for any $x\in \mathbb{R}$.
By Fact~\ref{fact1},
$$k^{p-(4s+7)q}=\bar{l}\bar{c}\bar{l}^s \bar{c}\bar{l}^s \bar{c}\bar{l}
\Longleftrightarrow
k^{(4s+7)q-p}=lcl^s cl^s cl \;\;\;(*)
$$

\noindent{\bf Claim 1.} $xlcl^s cl^s cl > xc^3$ for any $x\in \mathbb{R}$. 

\noindent{\it Proof of Claim 1.}
Since $xl>x$ for any $x\in\mathbb{R}$,
we have that $xlc>xc$ and $xl^s>x$ for any $x\in \mathbb{R}$.
If $x'=xlc$, then
$xlcl^s=x'l^s>x'=xlc>xc$,
and then we obtain
$xlcl^sc > xc^2$ for any $x\in\mathbb{R}$.
Similarly,
by $xlcl^s cl^s =x'l^s > x'=xlcl^sc$,
we obtain
$xlcl^s cl^s c > xlcl^sc^2 > xc^3.$
Therefore we obtain
$$xlcl^s cl^s cl=x'l>x'=xlcl^s cl^s c > xc^3.$$

\noindent{\bf Claim 2.} $xc^3>x$ for any $x \in \mathbb{R}$. 

\noindent{\it Proof of Claim 2.}
By the assumption $xk>x$ and $q>0$, we have
$$x < xk < xk^q = xM =xc.$$
Then we have $xc>x$ and $xc^3>x$.

By using the formula $(*)$ and Claim 1 and 2, we obtain
$$
xk^{(4s+7)q-p}=xlcl^s cl^s cl
> xc^3
> x.
$$
Under the assumption $xk>x$,
we have $(4s+7)q-p>0$ by above formula.
This is equivalent to $p/q<4s+7$.
Then it contradicts the assumption $p/q\geqq 4s+7$.
\end{proof}

\begin{proof}[{\it Proof of Theorem~\ref{maintheorem}}]
If $E_{K_s}(p/q)$ contains an $\mathbb{R}$-covered foliation,
all actions of $G_{K_s}(p,q)$ to $\mathbb{R}$ have no global fixed point.
By ~\cite[Corollary 7]{Jun}, if $q$ is odd, we can assume
all actions of $G_{K_s}(p,q)$ correspond to orientation-preserving homeomorphisms.
Then by Proposition~\ref{prop4} we conclude that
if $q>0$, $q$ is odd and $p/q\geqq 4s+7$, there is a point of $\mathbb{R}$
which fixed by any element of $G_{K_s}(p,q)$.
Therefore, $E_{K_s}(p/q)$ does not contain an $\mathbb{R}$-covered foliation.
\end{proof}

\section{some related topics and problems}

We will discuss some related topics and problems in this section.

We first mention about future problems.
In Theorem~\ref{maintheorem} we explore the case when the leaf space $\mathcal{T}$ is homeomorphic to
$\mathbb{R}$.
It is the first problem that we extend Theorem~\ref{maintheorem} to the general case that a leaf space $\mathcal{T}$
is homeomorphic to a simply connected $1$-manifold which might not be a Hausdorff space
similar to Theorem~\ref{ThmJun1}.
In this case, the situation of a leaf space $\mathcal{T}$ is very complicated.
But we already obtained the explicit presentation of the fundamental group of $E_{K_s}(p/q)$,
we are going to investigate the action of the fundamental group to a leaf space
referring the discussions used in \cite{RSS} and \cite{Jun}.
One of other directions of investigations is that we extend Theorem~\ref{maintheorem}
to the case for $(-2,2r+1,2s+1)$-type Pretzel knot $K_{r,s}$ ($r\geqq 1$, $s\geqq 3$).
In order to calculate the fundamental group $\pi_1(S^3\setminus K_{r,s})$,
we have to extend our method mentioned in Section 2.
In Section 2.2, the difference between the calculation for $K_{s}$ and $K_{r,s}$ exists from step 4 to step 9.
In these steps, we decrease a number of steps for disentangling these
crossings which corresponds to the integer $2r+1$.
Then by iterating these operation we will finally get the figure in Figure~\ref{fig_G_9to12} (a).
We are going to formulate these operations and corresponding Tietze transformations,
and intend to prove the case of
$K_{r,s}$ which is expected for the similar proof in Section 3.

Next we discuss about some related topics.
We first discuss our result in the viewpoint of Dehn surgery on knots.
A knot $K$ in $S^3$ has a finite or cyclic surgery if the resultant manifold $E_K(p/q)$
obtained by a non-trivial Dehn surgery along $K$ with a slope $p/q$ has a property
that its fundamental group is finite or cyclic respectively.
Determining and classifying which knots and slopes have a finite or cyclic surgery are an interesting problem.
If $E_K(p/q)$ contains a Reebless foliation, we can conclude that $E_K(p/q)$ does not have a finite and cyclic surgery
by properties we mentioned in Section 1.
For example, Delman and Roberts showed that no alternating hyperbolic knot
admits a non-trivial finite and cyclic surgery 
by proving the existence of essential laminations~\cite{DR}.
Our Pretzel knots $K_s$ are in the class of a Montesinos knot.
In \cite{IJ}, Ichihara and Jong showed that
for a hyperbolic Montesinos knot $K$
if $K$ admits a non-trivial cyclic surgery it must be $(-2,3,7)$-pretzel knot and the surgery slope is $18$ or $19$,
and if $K$ admits a non-trivial acyclic finite surgery it must be $(-2,3,7)$-pretzel knot and the slope is $17$, or $(-2,3,9)$-pretzel knot and the slope is $22$ or $23$.
In contrast, by this theorem, infinitely many knots in the family of pretzel knot $\{K_s\}$
which appeared in Theorem~\ref{maintheorem}
do not admit cyclic or finite surgery.
Then we have following corollary directly.

\begin{coro}\label{colo1}
There are infinitely many pretzel knots which does not admit finite or cyclic surgery,
but they admit Dehn surgery which produces a closed manifold
which cannot contain an $\mathbb{R}$-covered foliation.
\end{coro}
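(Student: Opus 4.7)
The plan is to assemble the corollary directly from Theorem~\ref{maintheorem} and the Ichihara--Jong classification quoted just above it. First I would isolate the subfamily of $\{K_s\}_{s\geq 3}$ to which the corollary applies. The only exceptional Montesinos knots in the Ichihara--Jong theorem are $(-2,3,7)$ (which is $K_3$) and $(-2,3,9)$ (which is $K_4$); for every $s \geq 5$, $K_s$ is therefore a hyperbolic Montesinos knot that admits no non-trivial cyclic or finite Dehn surgery whatsoever. The hyperbolicity of $K_s$ for $s\geq 3$ is a standard fact about $(-2,3,2s+1)$-pretzel knots (e.g.\ via Thurston's hyperbolization or Menasco-type criteria for Montesinos knots), so this point requires only a remark, not a calculation.

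Next, for each $s\geq 5$ I would exhibit one concrete slope meeting the hypotheses of Theorem~\ref{maintheorem}. The natural choice is $p = 4s+7$, $q = 1$: then $q>0$, $p/q = 4s+7$ lies on the boundary of the allowed range, and $p = 4s+7$ is odd because $4s$ is even. Theorem~\ref{maintheorem} then asserts that $E_{K_s}(p/q)$ does not contain any $\mathbb{R}$-covered foliation. Combining the two observations, each $K_s$ with $s\geq 5$ simultaneously (i) admits no finite or cyclic surgery and (ii) admits a Dehn surgery yielding a closed $3$-manifold with no $\mathbb{R}$-covered foliation.

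Finally, to conclude infiniteness I would note that the knots in $\{K_s\}_{s\geq 5}$ are pairwise distinct, as they can be distinguished by their genera or Alexander polynomials, so the family provides infinitely many pretzel knots with the required property. The whole argument is really a bookkeeping step rather than a substantive proof, and I do not expect any genuine obstacle beyond citing hyperbolicity of $K_s$ correctly; this is the only place where one must be slightly careful to ensure the Ichihara--Jong theorem applies.
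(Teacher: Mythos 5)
Your proposal is correct and follows the same route as the paper: the paper likewise deduces the corollary directly from the Ichihara--Jong classification of cyclic/finite surgeries on hyperbolic Montesinos knots together with Theorem~\ref{maintheorem}. Your write-up is merely more explicit (excluding $K_3$, $K_4$, choosing the slope $p/q=(4s+7)/1$ with $p$ odd, and noting the $K_s$ are pairwise distinct), which fills in bookkeeping the paper leaves implicit.
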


We had expected that proving the existence of Reebless foliations, especially $\mathbb{R}$-covered foliations,
or essential laminations
is of use for determining and classifying a non-trivial finite or cyclic surgery on other hyperbolic knots in the same way as \cite{DR},
but Corollary~\ref{colo1} means that
in the case of pretzel knots, an $\mathbb{R}$-covered foliation is not of use for it.

Next we discuss our result in the viewpoint of a left-orderable group.
A group $G$ is left-orderable
if there exists a total ordering $<$ of the elements of $G$ which is left invariant,
meaning that for any elements $f$, $g$, $h$ of $G$, if $f<g$ then $hf<hg$.
It is known that
a countable group $G$ is left-orderable if and only if
there exists a faithful action of $G$ on $\mathbb{R}$,
that is,
there is no point of $\mathbb{R}$ which fixed by any element of $G$.
By this fact,
if a closed $3$-manifold $M$ contains an $\mathbb{R}$-covered foliation,
the fundamental group of $M$ is left-orderable.
The fundamental groups $G_{K_s}(p,q)$ which satisfy the assumptions of Theorem~\ref{maintheorem}
do not have a faithful action on $\mathbb{R}$ by Proposition~\ref{prop4}.
Therefore we conclude the following corollary:

\begin{coro}
Let $K_s$ be a $(-2,3,2s+1)$-type Pretzel knot in $S^3$ ($s\geqq 3$),
$G=G_{K_s}(p,q)$ denotes the fundamental group of the closed manifold
which obtained by Dehn surgery along $K_s$ with slope $p/q$.
If $q>0$, $p/q\geqq 4s+7$ and $p$ is odd,
$G$ is not left-orderable.
\end{coro}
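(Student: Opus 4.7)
The plan is to deduce this corollary as an almost immediate reformulation of Proposition~\ref{prop4}, using the characterization of countable left-orderable groups recalled just before the corollary's statement.

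First I would observe that $G=G_{K_s}(p,q)$ is finitely presented---it comes from the two-generator, one-relator presentation of Proposition~\ref{presentation_of_Ks} after adjoining the Dehn filling relator $M^p L^q$---and is therefore countable. This lets me invoke the stated equivalence: $G$ is left-orderable if and only if there is a homomorphism $\Phi\colon G\to\mathrm{Homeo}^+(\mathbb{R})$ such that no point of $\mathbb{R}$ is fixed by every element of $\Phi(G)$.

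Next I would argue by contradiction. Supposing $G$ to be left-orderable, the equivalence above furnishes such a $\Phi$. But the hypotheses $q>0$ and $p/q\geq 4s+7$ are precisely those of Proposition~\ref{prop4}, so the proposition applies directly to $\Phi$ and produces a point $x\in\mathbb{R}$ fixed by every element of $\Phi(G)$, contradicting the defining property of $\Phi$. Hence $G$ cannot be left-orderable.

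I do not expect any genuine obstacle: the substantive work already lives in Proposition~\ref{prop4}, and the present statement is essentially just its restatement in the language of left-orderings. The parity hypothesis $p$ odd, retained for consistency with Theorem~\ref{maintheorem}, in fact plays no essential role here, because the dynamical realization of a left-ordering already takes values in the orientation-preserving subgroup $\mathrm{Homeo}^+(\mathbb{R})$; thus, unlike in the proof of Theorem~\ref{maintheorem}, no appeal to Jun's \cite[Corollary~7]{Jun} is needed to reduce to the orientation-preserving case.
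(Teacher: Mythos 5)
Your proposal is correct and follows essentially the same route as the paper: the paper likewise deduces the corollary directly from Proposition~\ref{prop4} together with the stated equivalence between left-orderability of a countable group and the existence of an action on $\mathbb{R}$ with no global fixed point. Your added remark that the dynamical realization lands in $\mathrm{Homeo}^+(\mathbb{R})$, so the parity hypothesis on $p$ is not actually needed here, is a small but accurate refinement of the paper's argument.
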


Roberts and Shareshian generalize the properties of the fundamental groups treated in \cite{RSS}.
They present conditions when the fundamental groups of a closed manifold obtained by Dehn filling of a once punctured torus bundle is not right-orderable~\cite[Corollary 1.5]{RS}.
These are examples of hyperbolic $3$-manifolds which has non right-orderable fundamental groups. 

Clay and Watson showed the following theorem in their paper.

\begin{theorem}\label{ThmCW}\textnormal{(A.\,Clay, L.\,Watson, 2012, \cite[Theorem 4.5]{CW})} 
Let $K_m$ be a $(-2,3,2m+5)$-type Pretzel knot.
If $p/q>2m+15$ and $m\geqq 0$, the fundamental group $\pi_1(E_{K_m}(p/q))$ is not left-orderable.
\end{theorem}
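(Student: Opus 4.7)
The plan is to imitate Section~3 of the present paper, with two upgrades needed to achieve Clay--Watson's bound. The first upgrade removes the hypothesis ``$p$ odd'' that appears in Theorem~\ref{maintheorem}. That condition is invoked via \cite[Corollary 7]{Jun} when reducing to orientation-preserving actions of $G_{K_s}(p,q)$ on $\mathbb{R}$. For the non-left-orderability conclusion, one instead appeals to the standard characterization that a countable group $G$ is left-orderable if and only if it embeds into $\mathrm{Homeo}^+(\mathbb{R})$ without a global fixed point (realized by putting $G$ inside $\mathbb{R}$ via the order and letting it act by left multiplication). Thus it suffices to produce, for every homomorphism $G \to \mathrm{Homeo}^+(\mathbb{R})$, a global fixed point, and the parity of $p$ plays no role.

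The second and more serious upgrade is the sharpening of the slope bound. For $m \geqq 1$ set $s = m+2$ so that $K_m = K_s$ in the notation of Proposition~\ref{presentation_of_Ks}, inheriting the two-generator, one-relator presentation and the explicit meridian-longitude pair. Running Lemma~\ref{lem1}, Fact~\ref{fact1}, Lemma~\ref{lem3}, and Proposition~\ref{prop4} through verbatim would give only the weaker bound $p/q \geqq 4m+15$; the improvement to $p/q > 2m+15$ requires a more efficient manipulation of the relator in the crucial inequality. For $m = 0$ the knot $K_0 = (-2,3,5)$ is a torus knot, so $E_{K_0}(p/q)$ is a Seifert fibered rational homology sphere, and non-left-orderability of its fundamental group for slopes $p/q > 15$ follows from the classical characterization of LO fundamental groups of Seifert fibered rational homology spheres in terms of their Seifert invariants, so this base case is handled separately.

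The main obstacle is the combinatorial sharpening of Claim~1 in the proof of Proposition~\ref{prop4}. The paper's argument applies $xc > x$ only three times, extracting $xlcl^{s}cl^{s}cl > xc^{3}$ and thereby producing the coefficient $4s+7$. To reach $2s+11$ one must extract a coefficient closer to $2s+5$ from the longitude word, for instance by establishing auxiliary inequalities of the form $xl^{s}c > xc^{s}$ in any ordering in which $xc,xl > x$, derived from the relator $clc\bar{l}\bar{c}\bar{l}^{s}\bar{c}\bar{l}clc l^{s-1}$. The palindromic structure of the relator (Theorem~5.3 of \cite{HTT}) suggests a symmetric substitution procedure in which each $l$-block $l^{s}$ can be traded against a comparable power of $c$, but turning this qualitative idea into an identity valid in every left-invariant order, and then assembling the refined inequalities to contradict $p/q > 2s+11$, is where the bulk of the technical work lies.
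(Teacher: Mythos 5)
There is a genuine gap, and it sits exactly where you placed the ``bulk of the technical work.'' First, note that the paper does not prove Theorem~\ref{ThmCW} at all: it is quoted from Clay--Watson, and the paper explicitly remarks that their method is different from the strategy of Section~3. So what you are really proposing is a new proof by sharpening Proposition~\ref{prop4}, and that sharpening is never carried out. Running Lemma~\ref{lem1}, Fact~\ref{fact1}, Lemma~\ref{lem3} and Proposition~\ref{prop4} with $s=m+2$ gives only $p/q\geqq 4s+7=4m+15$; to reach $2m+15=2s+11$ you would need to extract roughly $2s+4$ more powers of $c$ from the word $lcl^{s}cl^{s}cl$ than Claim~1 does, and the auxiliary inequality you propose for this, $xl^{s}c>xc^{s}$ whenever $xc>x$ and $xl>x$, is not derived from the relator but merely asserted as plausible from its palindromic shape. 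Nothing in the relator $clc\bar{l}\bar{c}\bar{l}^{s}\bar{c}\bar{l}clcl^{s-1}$ obviously forces $l$ to dominate $c$ in this quantitative sense in an arbitrary order with both generators positive at $x$; indeed, if such a trade were routine, the paper's own bound would not have stalled at $4s+7$. Until that inequality (or some substitute) is actually proved, the argument establishes nothing beyond Theorem~\ref{maintheorem} minus the parity hypothesis.

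Two smaller points. Your removal of the ``$p$ odd'' hypothesis is fine for the non-left-orderability statement: a countable nontrivial left-orderable group acts faithfully on $\mathbb{R}$ by orientation-preserving homeomorphisms without global fixed point, so Proposition~\ref{prop4} alone (which has no parity assumption) rules out left-orderability; the parity enters only through the foliation statement via \cite[Corollary 7]{Jun}. But the $m=0$ case is likewise only asserted: you invoke ``the classical characterization'' for Seifert fibered rational homology spheres obtained by surgery on the $(3,5)$-torus knot without verifying that slopes $p/q>15$ actually satisfy it, so even the base case is incomplete as written. As it stands the proposal is a program, not a proof, and the decisive combinatorial step separating it from Clay--Watson's bound is missing.
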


By the fact mentioned before, these fundamental groups do not have a faithful action on $\mathbb{R}$,
then these $E_{K_m}(p/q)$ do not admit an $\mathbb{R}$-covered foliation.
Although the method of the proof of Theorem~\ref{ThmCW} is different from our strategy,
it concludes a stronger result than ours in the sense of an estimation of surgery slopes.
By the aspects getting from these results, there are many interaction between a study of $\mathbb{R}$-covered foliations
and a study of left-orderability of the fundamental group of a closed $3$-manifold,
so we think that these objects will be more interesting.


\begin{thebibliography}{99}
	\bibitem{BZ}
		G.\,Burde, H.\,Zieschang,
		{\it Knots},
		Second edition. de Gruyter Studies in Mathematics, 5. Walter de Gruyter \& Co., Berlin, 2003. xii+559 pp. 
	\bibitem{CC1}
		A.\,Candel, L.\,Conlon,
		{\it Foliations I},
		Graduate Studies in Mathematics, {\bf 23}, AMS, Providence, RI, 2000. xiv+402 pp.
	\bibitem{CC2}
		A.\,Candel, L.\,Conlon,
		{\it Foliations II},
		Graduate Studies in Mathematics {\bf 60}, AMS, Providence, RI, 2003. xiv+545 pp.
	\bibitem{CW}
		A.\,Clay, L.\,Watson,
		{\it Left-orderable fundamental groups and Dehn surgery},
		International Mathematics Research Notices (2012), rns129, 29 pages.
	\bibitem{DR}
		C.\,Delman, R.\,Roberts,
		{\it Alternating knots satisfy Strong Property P},
		Comment. Math. Helv. 74 (1999), no. 3, 376--397.
	\bibitem{Fen}
		S.\,R.\,Fenley,
		{\it Laminar free hyperbolic $3$-manifolds},
		Comment. Math. Helv. {\bf 82}(2007), 247--321.
	\bibitem{GO}
		D.\,Gabai, U.\,Oertel,
		{\it Essential laminations in 3-manifolds}, 
		Ann. of Math. (2) {\bf 130} (1989), no. 1, 41--73. 
	\bibitem{HTT}
		H.\,M.\,Hilden, D.\,M.\,Tejada, M.\,M.\,Toro,
		{\it Tunnel number one knots have palindrome presentations},
		J. Knot Theory Ramifications, {\bf 11} (2002), no.5, 815--831.
	\bibitem{IJ}
		K.\,Ichihara, I.\,D.\,Jong,
		{\it Cyclic and finite surgeries on Montesinos knots}, 
		Algebr. Geom. Topol. {\bf 9} (2009), no. 2, 731--742.
	\bibitem{Jun}
		J.\,Jun,
		{\it $(-2,3,7)$-pretzel knot and Reebless foliation},
		Topology Appl. {\bf 145} (2004), no. 1-3, 209--232.
	\bibitem{Kob}
		T.\,Kobayashi,
		{\it A criterion for detecting inequivalent tunnels for a knot},
		Math. Proc. Cambridge Philos. Soc. {\bf 107} (1990), 483--491.
	\bibitem{MSY}
		K.\,Morimoto, M.\,Sakuma, Y.\,Yokota,
		{\it Identifying tunnel number one knots},
		J. Math. Soc. Japan {\bf 48} (1996), no.4, 667--688.
	\bibitem{No}
		S.\,Novikov,
		{\it Topology of foliations},
		Trans. Moscow Math. Soc. {\bf 14} (1965), 268--305.
	\bibitem{Pa}
		C.\,F.\,B.\,Palmeira,
		{\it Open manifolds foliated by planes},
		Ann. of Math. {\bf 107} (1978), 109--131.
	\bibitem{RSS}
		R.\,Roberts, J.\,Shareshian, M.\,Stein,
		{\it Infinitely many hyperbolic $3$-manifolds which contain no Reebless foliation},
		J. Amer. Math. Soc. {\bf 16} (2003), no.3, 639--679. 
	\bibitem{RS}
		R.\,Roberts, J.\,Shareshian,
		{\it Non-right-orderable 3-manifold groups},
		Canad. Math. Bull. {\bf 53} (2010), no. 4, 706--718. 
	\bibitem{Ro}
		H.\,Rosenberg,
		{\it Foliations by planes},
		Topology {\bf 7} (1968), 131--138.
	\bibitem{W}
		J.\,Weeks,
		SnapPea,
		\url{http://www.geometrygames.org/SnapPea/}
\end{thebibliography}
\end{document}